\documentclass[a4paper,11pt,twoside]{article}

\usepackage{amsmath,amssymb,amsthm,mathrsfs}

\usepackage{hyperref}

\sloppy

\numberwithin{equation}{section}

\newtheorem{Thm}{Theorem}[section]
\newtheorem{Prop}[Thm]{Proposition}
\newtheorem{Lem}[Thm]{Lemma}
\theoremstyle{definition}
\newtheorem{Expl}[Thm]{Example}

\newcommand{\Q}{\mathbb{Q}}
\newcommand{\R}{\mathbb{R}}
\newcommand{\Z}{\mathbb{Z}}

\newcommand{\E}{\operatorname{E}}

\newcommand{\cR}{\mathscr{R}}

\newcommand{\bx}{\mathbf{x}}

\renewcommand{\b}{\bar}
\newcommand{\bary}{\operatorname{bar}}
\newcommand{\CAT}{\operatorname{CAT}}
\newcommand{\del}{\delta}
\newcommand{\Del}{\Delta}
\newcommand{\eps}{\varepsilon}
\newcommand{\es}{\emptyset}
\newcommand{\Gam}{\Gamma}
\newcommand{\gam}{\gamma}
\newcommand{\im}{\operatorname{im}}
\newcommand{\Isom}{\operatorname{Isom}}
\newcommand{\lam}{\lambda}
\newcommand{\Min}{\operatorname{Min}}
\newcommand{\ol}{\overline}
\newcommand{\olC}{\,\overline{\!C}}
\newcommand{\olf}{\,\overline{\!f}}
\renewcommand{\phi}{\varphi}
\renewcommand{\rho}{\varrho}
\newcommand{\sig}{\sigma}
\newcommand{\sm}{\setminus}
\newcommand{\sub}{\subset}

\hyphenation{Lip-schitz ho-meo-mor-phic geo-de-sic geo-de-sics}

\title{Flats in spaces with convex geodesic bicombings}

\author{Dominic Descombes \& 
Urs Lang\thanks{Research supported by the Swiss National Science Foundation.}}


\date{March 2, 2016}

\pagestyle{myheadings}
\markboth{D.~Descombes \& U.~Lang}%
{Flats in spaces with convex geodesic bicombings}

\begin{document}


\maketitle

\begin{abstract}
In spaces of nonpositive curvature the existence of isometrically embedded 
flat (hyper)planes is often granted by apparently weaker conditions on large 
scales. We show that some such results remain valid for metric spaces
with non-unique geodesic segments under suitable convexity assumptions 
on the distance function along distinguished geodesics. 
The discussion includes, among other things, the Flat Torus Theorem and 
Gromov's hyperbolicity criterion referring to embedded planes. 
This generalizes results of Bowditch for Busemann spaces.
\end{abstract}
 

\section{Introduction}

The geometry of spaces of global nonpositive curvature is largely dominated by 
the convexity of the distance function. Thus a considerable part of 
the theory of $\CAT(0)$ spaces~\cite{Bal, BriH} carries over to Busemann 
spaces~\cite{Bus, Pap} (defined by the property that $d \circ (\sig_1,\sig_2)$
is convex for any pair of constant speed geodesics $\sig_1,\sig_2$ 
parametrized on the same interval). 
However, this larger class of spaces has the defect of not being preserved 
under limit processes. For example, among normed real vector spaces, 
exactly those with strictly convex norm satisfy the Busemann property, 
and a sequence of such norms on $\R^n$, say, may converge to a non-strictly 
convex norm. This motivates the study of an even weaker 
notion of nonpositive curvature that dispenses with the uniqueness of 
geodesics but retains the convexity condition for a suitable selection of 
geodesics (compare Sect.~10 in~\cite{Kle}). In any normed space,
the affine segments $t \mapsto (1-t)x + ty$ ($t \in [0,1]$) provide 
such a choice. In particular, the relaxed condition carries the potential for 
simultaneous generalizations of results for nonpositively curved and 
Banach spaces. Another reason for this investigation is that $l_1$- and 
$l_\infty$-type metrics have been put in use in geometric group theory; 
see, for example, \cite{BehDS, Bow2, ChaDH, Lan}. 
The recent paper~\cite{KapL} shows that symmetric spaces of noncompact 
type possess natural, non-strictly convex Finsler metrics adapted to the 
geometry of Weyl chambers and pertinent to the dynamics at infinity.

In a previous article we initiated a systematic study of
spaces of weak global nonpositive curvature as described above, with the main 
objective of providing geometric models of this type for hyperbolic groups;
see~\cite{DesL} and in particular Theorem~1.3 therein. 
The purpose of the present paper is to carry the analogy with $\CAT(0)$ and 
Busemann spaces further with regard to existence results for flat subspaces. 
Here, for a metric space $X = (X,d)$, a map 
$\sig \colon X \times X \times [0,1] \to X$ will be simply called a 
{\em bicombing} if the respective family of maps 
$\sig_{xy} := \sig(x,y,\cdot) \colon [0,1] \to X$ satisfies the following 
three properties:
\begin{enumerate}
\item[\rm (i)]
$\sig_{xy}$ is a geodesic from $x$ to $y$, that is, 
$\sig_{xy}(0) = x$, $\sig_{xy}(1) = y$, and 
$d(\sig_{xy}(t),\sig_{xy}(t')) = |t-t'|\,d(x,y)$ for $t,t' \in [0,1]$;  
\item[\rm (ii)]
$\sig_{yx}(t) = \sig_{xy}(1-t)$ for $t \in [0,1]$;
\item[\rm (iii)]
$d(\sig_{xy}(t),\sig_{x'y'}(t)) \le (1-t)\,d(x,x') + t\,d(y,y')$ for
$t \in [0,1]$.
\end{enumerate}
(This corresponds to a conical and reversible geodesic bicombing in the 
terminology of~\cite{DesL}.) Notice that these conditions
do not ensure that $t \mapsto d(\sig_{xy}(t),\sig_{x'y'}(t))$ is a 
{\em convex} function on $[0,1]$. This is guaranteed under the following 
extra assumption on the traces:
\begin{enumerate}
\item[\rm (iv)]
$\im(\sig_{pq}) \sub \im(\sig_{xy})$ whenever $p = \sig_{xy}(r)$ and
$q = \sig_{xy}(s)$ with $r \le s$.
\end{enumerate}
Note that then $\sig_{pq}(t) = \sig_{xy}((1-t)r + ts)$ for 
$t \in [0,1]$ by~(i). A bicombing~$\sig$ satisfying~(iv) will be 
called {\em consistent}. Busemann spaces and convex subsets of 
normed spaces possess consistent bicombings, whereas some additional examples 
of (general) bicombings are obtained via $1$-Lipschitz retractions onto 
subspaces. We refer to~\cite{DesL} for more information.
 
Our first main result is the following generalization of the 
hyperbolicity criterion for cocompact $\CAT(0)$ spaces stated on 
p.~119 in~\cite{Gro}. A detailed proof of Gromov's result, 
inspired by~\cite{Ebe}, was given in~\cite{Bri}. 
For the case of Busemann spaces, 
both Theorem~\ref{Thm:intro-flat-plane} and Theorem~\ref{Thm:intro-flat-torus} 
below were shown by Bowditch~\cite{Bow}.  

\begin{Thm}[Flat plane] \label{Thm:intro-flat-plane}
Let $X$ be a proper metric space with a consistent bicombing~$\sig$ and 
with cocompact isometry group. 
Then $X$ is hyperbolic if and only if $X$ does not contain an 
isometrically embedded normed plane.
\end{Thm}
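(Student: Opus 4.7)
The ``only if'' direction is elementary: any normed plane $(\R^2, \|\cdot\|)$ contains four-point configurations violating Gromov's $\del$-inequality for every $\del$ (e.g.\ the vertices of large Euclidean squares), so an isometric copy of such a plane in $X$ forces $X$ to be non-hyperbolic. For the converse, suppose $X$ is not hyperbolic. The plan is to adapt the $\CAT(0)$ argument of Bridson~\cite{Bri} (and its Busemann-space version by Bowditch~\cite{Bow}), using the convexity of $t \mapsto d(\sig_{xy}(t), \sig_{x'y'}(t))$ granted by consistency of $\sig$ as a replacement for the $\CAT(0)$ or Busemann inequality.

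By the standard equivalence between Gromov's four-point condition and the thin-triangle condition for $\sig$-geodesic triangles in a space with consistent bicombing, non-hyperbolicity produces a sequence of such triangles $T_n$ with vertices $x_n, y_n, z_n$ and points $p_n \in \im(\sig_{y_n z_n})$ whose distance to $\im(\sig_{x_n y_n}) \cup \im(\sig_{x_n z_n})$ exceeds $n$. Let $\Gam = \Isom(X)$ act cocompactly with compact fundamental set $K$, and choose $g_n \in \Gam$ with $g_n(p_n) \in K$; after replacing each $T_n$ by its $g_n$-translate, the three unit-speed $\sig$-segments from $p_n$ to the vertices are $1$-Lipschitz maps based in $K$ whose lengths tend to infinity. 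Properness and Arzel\`a--Ascoli then yield, along a subsequence, three geodesic rays $\alpha, \beta, \gam \colon [0,\infty) \to X$ issuing from a common limit point $p \in K$. Continuity of $\sig$ (from (iii)) together with consistency~(iv) joins $\beta$ and $\gam$ into a full $\sig$-geodesic line $L$ through $p$, while the quantitative fatness of the $T_n$ forces $\alpha$ to diverge linearly from $L$.

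Next, the family of bicombing segments $\{\sig_{qq'} : q \in L,\, q' \in \im(\alpha)\}$ should assemble into an isometric embedding of a closed normed half-plane $H$ with boundary $L$ into $X$. Indeed, pairwise convexity of $\sig$-distances, combined with the linear divergence of $\alpha$ from $L$, forces the distance function along each such segment to be affine in the limit; consistency then guarantees that the segments intersect coherently, endowing $H$ with a two-dimensional affine structure carrying a norm inherited from $d$. A further application of the normalization-extraction procedure---now translating base points on the boundary lines of half-plane approximants arising from the $T_n$ into $K$ and extracting a limit that extends across $L$---promotes $H$ to a full isometrically embedded normed plane in $X$.

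The main obstacle is the identification of the limit as a genuine flat, two-dimensional half-plane. One must combine the saturation of the convexity inequalities in the limit (so that $\sig$-distances along transverse segments become exactly affine rather than merely convex) with the consistency axiom (ensuring that transverse segments form a coherent affine foliation rather than crossing in uncontrolled ways). The final extraction across $L$ to obtain a full plane relies on the linear divergence of $\alpha$ from $L$ inherited from the original fatness of the $T_n$, preserved through the limits by the uniform $1$-Lipschitz control on $\sig$, to rule out collapse to a lower-dimensional flat piece.
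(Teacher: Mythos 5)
Your overall template (fat $\sig$-triangles via Lemma~\ref{Lem:hyp}, renormalization by cocompactness, a flat half-plane criterion, extension to a full plane) is the same as the paper's, but the heart of the argument --- the actual construction of the flat half-plane --- has genuine gaps. The object you propose to flatten, the family $\{\sig_{qq'}: q\in L,\ q'\in\im(\alpha)\}$ of segments between a line and a transversal ray, is not the right one: even in $\CAT(0)$ such a ruled set is not flat, and nothing in your argument makes the relevant distance functions affine ("saturation of the convexity inequalities in the limit" is asserted at a point where no limit is being taken). A workable half-plane criterion (Proposition~\ref{Prop:flat-half-plane}) requires a line $\xi$ together with a coherent family of \emph{mutually asymptotic} rays $\eta_s$ issuing from the points $\xi(s)$, with $s\mapsto d(\xi(s+a),\eta_s(1))$ \emph{constant} and nonzero; constancy does not follow from convexity alone. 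Moreover, your claim that fatness forces $\alpha$ to diverge linearly from $L$ is unsupported: the condition~\eqref{eq:n-ball} controls the distance from $p_n$ to the two far sides, not the rate at which the median segment $[p_n,y^2_n]$ leaves the base side, and in the limit the ray toward the opposite vertex could be asymptotic to one half of $L$, collapsing your configuration.

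The paper avoids both problems by a different choice of base rays and by a second limit process. It takes $x^i_n$ to be a \emph{nearest point} to $p_n$ on the opposite side $[y^i_n,y^{i+1}_n]$ and builds ruled surfaces $\Del^{\!i,j}_n$ over the segments $[p_n,x^i_n]$ with transverse segments running to the vertices $y^j_n$; the nearest-point choice yields the lower bound~\eqref{eq:s-r}, which survives the limit and rules out degeneration. A pigeonhole over the four pairs $(i,j)$, using that the limits of the two halves of $[y^1_n,y^3_n]$ are non-asymptotic, produces one limit surface whose base ray $\xi$ and ruling rays $\eta_s$ (all asymptotic to $\eta$) satisfy: $s\mapsto d(\xi(s+a),\eta_s(b))$ is monotone with a strictly positive limit $L(a,b)$. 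A \emph{second} renormalized limit --- translating the base points $\xi(l)$ back into a compact set and letting $l\to\infty$ --- replaces these monotone functions by their constant positive limits, which is exactly the hypothesis of the second part of Proposition~\ref{Prop:flat-half-plane}; only then does one get an isometrically embedded normed half-plane, and a final application of cocompactness gives the plane. Your proposal is missing this second limit process, which is the mechanism that converts convex/monotone distance data into the affine data of a flat; without it, or a substitute for it, the construction of $H$ does not go through.
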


Another well-known result from the theory of spaces of nonpositive curvature
is the Flat Torus Theorem, originally proved for smooth manifolds 
in~\cite{GroW,LawY} (see also~\cite{Pre,EelS} for some earlier contributions 
in this direction). A detailed account of this result and its applications 
in the context of $\CAT(0)$ spaces is given in Chap.~II.7 of~\cite{BriH}. 
We have:

\begin{Thm}[Flat torus] \label{Thm:intro-flat-torus}
Let $X$ be a proper metric space with a consistent bicombing~$\sig$.
Let\/ $\Gam$ be a group acting properly and cocompactly by isometries on $X$,
and suppose that $\sig$ is\/ $\Gam$-equivariant. 
If\/ $\Gam$ has a free abelian subgroup group $A$ of rank $n \ge 1$, 
then $X$ contains an isometrically embedded $n$-dimensional normed space 
on which $A$ acts by translations.
\end{Thm}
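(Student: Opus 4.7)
The plan is to proceed by induction on the rank $n$, the main technical ingredient being a flat strip theorem for minimum sets. For each nontrivial $a \in A$ set $\ell(a) := \inf_{x \in X} d(x, ax)$ and $\Min(a) := \{x \in X : d(x, ax) = \ell(a)\}$; by properness of $X$ together with properness and cocompactness of the $\Gam$-action, the infimum is attained, so $\Min(a)$ is nonempty and closed. I would first show that $\Min(a)$ is $\sig$-convex: for $x, y \in \Min(a)$, the $\Gam$-equivariance of $\sig$ gives $\sig_{ax, ay}(t) = a\,\sig_{xy}(t)$, and property~(iii) then yields $d(\sig_{xy}(t), a\,\sig_{xy}(t)) \le \ell(a)$, forcing equality and $\im(\sig_{xy}) \sub \Min(a)$. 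For $a$ of infinite order, concatenating the segments $\sig_{a^k x,\, a^{k+1} x}$ over $k \in \Z$ (with consistency securing that the glued path is an isometric embedding) produces for each $x \in \Min(a)$ a full geodesic line (an \emph{axis}) $\gam_x \colon \R \to \Min(a)$ on which $a$ acts by translation of length $\ell(a)$.

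The decisive step is a flat strip theorem: any pair of axes $\im(\gam_x), \im(\gam_y) \sub \Min(a)$ bounds an isometrically embedded strip, and the union of all axes in $\Min(a)$ splits as a metric product $Y \times \R$ on which $a$ acts by $(y, s) \mapsto (y, s + \ell(a))$, with $Y$ inheriting a proper consistent bicombing. The proof would exploit convexity of $t \mapsto d(\sig_{\gam_x(s), \gam_y(s)}(t), a\,\sig_{\gam_x(s), \gam_y(s)}(t))$ together with its invariance under shifting $s$ by $\ell(a)$, and a saturation argument using consistency~(iv) to force the equality required for flatness.

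Since $A$ is abelian, any $b \in A$ commutes with $a$, hence $b(\Min(a)) = \Min(a)$ and $b$ permutes the axes, so it preserves the splitting and acts as $(y, s) \mapsto (by, s + \tau_a(b))$ for some homomorphism $\tau_a \colon A \to \R$. Applying the induction hypothesis inside $Y$ to a complementary rank-$(n-1)$ subgroup of $A$ provides an $(n-1)$-dimensional normed flat $F' \sub Y$ on which that subgroup acts by translations; the product $F' \times \R \sub X$, equipped with the translation action of $A$ twisted by $\tau_a$, yields the desired $n$-dimensional flat $F$. That $F$ is isometric to a normed space follows because the $\Z^n$-translation action is cocompact and the restriction of $\sig$ gives a translation-invariant bicombing on $F$, so the metric on $F \cong \R^n$ is translation-invariant and convex along straight lines, hence a norm metric.

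The principal obstacle is the flat strip theorem itself. In the $\CAT(0)$ setting this rests on uniqueness of geodesics and sharp comparison inequalities, whereas here neither is available, and one must use consistency~(iv) to promote the convexity of displacement functions to the genuine flatness required to split $\Min(a)$ as a metric product. A secondary issue is to ensure that properness and the relevant cocompactness persist after passing to $Y$, so that the induction can actually be carried out inside the factor.
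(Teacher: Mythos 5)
Your plan is the classical Bridson--Haefliger induction via a splitting $\Min(a)\cong Y\times\R$, and the step you yourself flag as the ``principal obstacle'' is not merely hard in this setting --- the statement itself fails. In a space with a consistent bicombing the union of the axes of a hyperbolic element does not split as a metric product. Already in $X=(\R^2,\|\cdot\|)$ with $a$ acting by translation by $(1,0)$, every point lies on an axis (a horizontal line), and the distance between points on two parallel axes is $\|(r,c)\|$, which for a general norm is not obtained from the transverse separation and the offset $r$ by any fixed product rule; so there is no factor $Y$ and no product metric reproducing $d$. Example~2.2 of the paper exhibits a worse pathology: two parallel $\sig$-lines can bound two genuinely different flat strips, so even the strip between a pair of axes is not canonical. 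What survives (Proposition~2.1 and Theorem~2.3) is only that a pair of parallel $\sig$-lines spans an isometrically embedded \emph{normed} strip; there is no ambient product decomposition to induct on, and consequently no way to hand the complementary subgroup a proper, cocompact action on a factor $Y$ --- cocompactness would be lost in any case, which matters for the reasons below.

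A second gap: your axes, obtained by concatenating the segments $\sig_{a^kx,\,a^{k+1}x}$, are geodesic lines but need not be $\sig$-lines, and any flat-strip argument in this setting requires the lines to belong to a coherent family. Example~5.4 of the paper gives a hyperbolic isometry of $l_\infty(\Z)$ (with its unique bicombing) that is axial but has no $\sig$-axis at all; producing a $\sig$-axis requires the cocompact group action together with a fixed-point theorem for rotative nonexpansive maps (Goebel--Koter), i.e.\ the content of Section~5, and this input would not be inherited by a factor $Y$ in your induction. The paper's actual route avoids induction entirely: it proves $\Min(A)=\bigcap_{\alpha\in A}\Min(\alpha)\ne\es$ using an equivariant barycenter map for finite subsets, defines the norm by $\|a\|:=|\alpha|$, builds a single $1$-Lipschitz map $g\colon\R^n\to X$ by sending each rational ray to a ray asymptotic to the corresponding $\sig$-axis, and then averages translates of $g$ over larger and larger cubes of the $\Z^n$-action with the barycenter map; an Arzel\`a--Ascoli limit of these averages yields the equivariant isometric embedding. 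If you want to retain a strip-based picture, you would have to replace the product splitting by an asymptotic or averaging argument of this kind.
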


Here, $\sig$ being {\em $\Gam$-equivariant} means that 
$\gam \circ \sig_{xy} = \sig_{\gam(x)\gam(y)}$ for all $\gam \in \Gam$ and 
$(x,y) \in X \times X$. For example, beyond uniquely geodesic spaces,
every injective metric space (or absolute $1$-Lipschitz retract) $X$
admits a bicombing~$\sig$ that is equivariant with respect to the 
full isometry group $\Isom(X)$ of $X$; see Proposition~3.8 in~\cite{Lan}. 
Furthermore, it is shown in~\cite{DesL} that every proper 
metric space $X$ with a bicombing and with finite combinatorial dimension in 
the sense of~\cite{Dre} also admits a {\em unique consistent}\/ bicombing, 
which is $\Isom(X)$-equivariant, too.

We briefly introduce some terminology that will be used throughout the paper.
Let $X$ be a metric space. A map $\xi \colon I \to X$ of some interval 
$I \sub \R$ is a {\em geodesic} if there is a constant $c \ge 0$, 
the {\em speed} of $\xi$, such that $d(\xi(t),\xi(t')) = c|t - t'|$ for all
$t,t' \in I$. A {\em line} or a {\em ray} in $X$ is a unit speed geodesic
defined on $\R$ or $\R_+ := [0,\infty)$, respectively. Two lines $\xi,\xi'$ are 
{\em parallel}\/ if $\sup_{s \in \R} d(\xi(s),\xi'(s)) < \infty$, 
and two rays $\eta,\eta'$ are {\em asymptotic} 
if $\sup_{s \in \R_+} d(\eta(s),\eta'(s)) < \infty$. 
A family of geodesics $\xi_a \colon I_a \to X$ indexed by a set $A$ will 
be called {\em coherent} if $t \mapsto d(\xi_a(\alpha(t)),\xi_b(\beta(t)))$
is a convex function on $[0,1]$ whenever $a,b \in A$ and 
$\alpha \colon [0,1] \to I_a$ and $\beta \colon [0,1] \to I_b$
are affine maps\footnote{$\alpha,\beta$ are neither assumed 
to be surjective nor orientation preserving.}. Notice that if $\sig$ is a 
consistent bicombing on $X$ and $A \sub X \times X$ is any set, then
$\{\sig_{xy}: (x,y) \in A\}$ is a coherent family. 
Given a bicombing $\sig$ on $X$, we shall often write 
$[x,y](t)$ for $\sig_{xy}(t)$ and $[x,y]$ for $\im(\sig_{xy})$ without further 
comment. A set $C \sub X$ will be called {\em $\sig$-convex} if 
$[x,y] \sub C$ whenever $x,y \in C$. The  {\em (closed) $\sig$-convex hull}
of a subset $S \sub X$ is the smallest (closed) $\sig$-convex set 
containing~$S$. 
A line $\xi \colon \R \to X$ will be called a {\em $\sig$-line} if its trace 
is $\sig$-convex; equivalently, 
$[\xi(r),\xi(s)](t) = \xi((1-t)r + ts)$ for all $r,s \in \R$ and 
$t \in [0,1]$. For two $\sig$-lines $\xi,\xi'$ the function 
$s \mapsto d(\xi(s),\xi'(s))$ is convex, hence constant in case $\xi,\xi'$ 
are parallel. 

The paper is organized as follows. 
In Sect.~\ref{Sect:flat-strips} 
we discuss a generalized Flat Strip Theorem. Unlike for Busemann spaces, 
the $\sig$-convex hull of a pair of parallel $\sig$-lines may be ``thick'' 
and the two lines may span different, though pairwise isometric, 
flat (normed) strips. We also give a criterion for the existence of an 
embedded normed half-plane.
This is then used in Sect.~\ref{Sect:flat-planes} for the proof of 
Theorem~\ref{Thm:intro-flat-plane}. A variant of this result for injective 
metric spaces is also shown.
In Sect.~\ref{Sect:semi-simple-isometries} we establish basic properties 
of semi-simple isometries of spaces with bicombings. We employ a barycenter 
map for finite subsets which was introduced in the context of Busemann spaces 
in~\cite{EsSH}.
Sect.~\ref{Sect:sig-axes} addresses the question whether a hyperbolic 
(axial) isometry of a metric space with a consistent bicombing $\sig$
also possesses an axis that is at the same time a $\sig$-line. 
This is false in general but holds true for the hyperbolic 
elements of a group $\Gam$ as in Theorem~\ref{Thm:intro-flat-torus}.
As an auxiliary tool we use a fixed point theorem for nonexpansive mappings
proved originally for Banach spaces in~\cite{GoeK}. 
Finally, Sect.~\ref{Sect:flat-tori} contains the proof 
of Theorem~\ref{Thm:intro-flat-torus}, and we conclude by an example in which
the embedded flat cannot be chosen to be $\sig$-convex.

In a subsequent paper~\cite{Des} by the first author it is shown that a 
proper cocompact metric space $X$ with a (possibly non-consistent) 
bicombing contains an isometric copy of the $n$-dimensional normed space $V$
under the following asymptotic condition, also studied in~\cite{Wen}: 
there exist subsets $S_k \sub X$ and a sequence $0 < R_k \to \infty$ such 
that the rescaled sets $(S_k,R_k^{-1}d)$ converge in the 
Gromov--Hausdorff topology to the unit ball of $V$. 
This generalizes a result of Kleiner for Busemann spaces; 
see Proposition~10.22 and the more comprehensive Theorem~D in~\cite{Kle}. 
Likewise, it follows that a proper cocompact metric space $X$ with a 
bicombing contains a flat (normed) $n$-plane whenever there is a 
quasi-isometric embedding of $\R^n$ into $X$, a result which was first 
shown for manifolds of nonpositive curvature in~\cite{AndS}. 
Using these more recent results from~\cite{Des} one can extend 
Theorem~\ref{Thm:intro-flat-plane} and Theorem~\ref{Thm:intro-flat-torus}, 
except possibly for the fact that the subgroup $A$ acts on the embedded flat, 
to the case of general bicombings. It is not clear, however, whether this
yields significant improvements. In fact, it is still an open problem whether 
there exists a metric space, proper or not, that admits a bicombing but no 
consistent bicombing. In any case, the arguments presented here are more 
direct. If the (consistent) bicombing~$\sig$ in 
Theorem~\ref{Thm:intro-flat-plane} is equivariant with respect to a cocompact 
group of isometries of $X$, and $X$ is non-hyperbolic, then the construction 
we describe produces an embedded normed plane that 
is foliated by mutually parallel $\sig$-lines in at least one direction.


\section{Flat strips and half-planes} \label{Sect:flat-strips}

We start with the construction of an embedded flat strip in an arbitrary metric 
space, using only a minimal coherent family of geodesics, as defined in the 
introduction. 

\begin{Prop}[Flat strip] \label{Prop:flat-strip}
Let $X$ be a metric space.  
Suppose that $\{\xi,\xi'\} \cup \{\eta_s: s\in \R\}$  
is a coherent collection of geodesics in $X$, where $\xi,\xi'$ are two 
parallel lines with disjoint images and $\eta_s \colon [0,1] \to X$ 
is a geodesic from $\xi(s)$ to $\xi'(s)$ for every $s \in \R$.
Then the map
\[
f \colon \R \times [0,1] \to X, \quad f(s,t) = \eta_s(t),
\]
is an isometric embedding with respect to the metric on 
$\R \times [0,1]$ induced by some norm on $\R^2$.
\end{Prop}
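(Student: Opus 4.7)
The goal is to produce a norm $\|\cdot\|$ on $\R^2$ such that $d(f(s,t),f(s',t')) = \|(s'-s,t'-t)\|$, making $f$ an isometric embedding of the strip $\R \times [0,1]$. I proceed in three stages.

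First, the pair $\{\xi,\xi'\}$ is coherent, so $s \mapsto d(\xi(s),\xi'(s))$ is convex; being bounded by the parallelism hypothesis, it is in fact constant, equal to some $c > 0$. Each $\eta_s$ therefore has speed $c$.

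The second and most delicate step is to show that for every $t \in [0,1]$, the horizontal map $\gam_t(s) := \eta_s(t)$ is a unit-speed line parallel to $\xi$ and $\xi'$, i.e.\ that $d(\eta_s(t),\eta_{s'}(t)) = |s-s'|$. The inequality $\le$ is immediate from the convexity (in $t$) of $d(\eta_s(t),\eta_{s'}(t))$ provided by coherence of $\{\eta_s,\eta_{s'}\}$, whose boundary values at $t = 0,1$ both equal $|s-s'|$. For the reverse inequality, consider the nonnegative deficit $\del(s,s';t) := |s-s'| - d(\eta_s(t),\eta_{s'}(t))$. Two facts about $\del$ are the key: it is super-additive in the interval $[s,s']$ (triangle inequality in $X$), and it is uniformly bounded by $2c\min(t,1-t)$ (triangle via $\xi$ or $\xi'$). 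Summing the super-additivity along an arbitrarily long arithmetic chain in $s$, combined with the continuity of $\del(\cdot,\cdot;t)$ (itself a consequence of the $1$-Lipschitz dependence of $s \mapsto \eta_s(t)$) and the fact that $s$ ranges over all of $\R$, then forces $\del \equiv 0$.

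Third, with both families $\{\gam_t\}$ and $\{\eta_s\}$ of geodesic lines in hand, I show that $d(\eta_s(t),\eta_{s'}(t'))$ depends only on $(s'-s,t'-t)$, by combining coherence of the pair $(\eta_s,\eta_{s'})$ with its $s$-translate $(\eta_{s+a},\eta_{s'+a})$ and using the equality of horizontal distances just established. Defining $\|(a,b)\| := d(\eta_0(0),\eta_a(b))$ for $b \in [0,1]$ and $\|(a,b)\| := d(\eta_0(-b),\eta_a(0))$ for $b \in [-1,0]$, and then extending by positive homogeneity to all of $\R^2$, yields a function $\|\cdot\|$ whose symmetry, positive-definiteness, and triangle inequality follow from the structure established in the preceding stages. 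By construction $d(f(s,t),f(s',t')) = \|(s'-s,t'-t)\|$, completing the proof.

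The main obstacle is clearly the second stage. The hypothesis only provides pairwise convexity along the specific coherent family, not a Busemann-type convexity of distance along all geodesics of $X$, so the standard Busemann Flat Strip argument does not apply directly. Extracting the exact equality of horizontal distances — rather than merely the one-sided bound from pairwise convexity — requires the more delicate combination of super-additivity of the deficit, its uniform bound via the two reference lines, and continuity of the family in the $s$-parameter.
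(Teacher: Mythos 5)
The decisive step of your argument --- stage two, where you claim that superadditivity of the deficit $\del(s,s';t)=|s-s'|-d(\eta_s(t),\eta_{s'}(t))$, its uniform bound $2c\min(t,1-t)$, and continuity in $s$ together force $\del\equiv 0$ --- does not go through. Those three properties are genuinely insufficient: for any continuous, nondecreasing, bounded $u$ with $u'\le 1$ (say $u(s)=\tfrac12\arctan s$), the interval function $\del(s,s'):=u(s')-u(s)$ is nonnegative, additive (hence superadditive), continuous and uniformly bounded, yet not identically zero, and it is precisely the deficit of the $1$-Lipschitz map $s\mapsto s-u(s)$ of $\R$ into itself. Summing superadditivity along a long arithmetic chain only shows that the series $\sum_i\del(s+ih,s+(i+1)h;t)$ converges, hence that the deficits of far-away unit intervals tend to zero; it says nothing about the deficit near a given $s$, because you have no translation invariance in $s$ at this stage --- that invariance is part of what the proposition asserts, not part of the hypotheses. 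In short, the data you feed into stage two (the $1$-Lipschitz bound in $s$ and triangle inequalities routed through $\xi$ and $\xi'$ alone) cannot pin down the horizontal distances; the coherence with the transversal geodesics $\eta_s$ must enter the lower bound.

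That is exactly what the paper's proof does, and in the opposite order to yours: it first determines the \emph{diagonal} distances and only then obtains the horizontal ones as a limit. Concretely, for $p=(s,t)$ and $p'=(s+\Del s,t+\Del t)$ with $\Del t>0$ and $r=\Del s/\Del t$, let $q,q'$ be the points where the line through $p,p'$ meets $\R\times\{0\}$ and $\R\times\{1\}$; then $d(f(q),f(q'))=\nu(r):=d(\xi(0),\xi'(r))$, while coherence gives the three upper bounds $d(f(q),f(p))\le t\,\nu(r)$, $d(f(p),f(p'))\le\Del t\,\nu(r)$ and $d(f(p'),f(q'))\le(1-t-\Del t)\,\nu(r)$, whose sum is exactly $\nu(r)$, so the triangle inequality turns all of them into equalities. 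The horizontal case $\Del t=0$ then follows by letting $\Del t\to 0$ in $|\Del t|\,\nu(\Del s/\Del t)\to|\Del s|$. If you wish to keep your outline, stage two must be replaced by such a sandwich argument (or some other mechanism that brings the segments $\eta_s$, and not only $\xi,\xi'$, into the lower bound); once the diagonal formula is available, your stage three is essentially the paper's construction of the norm, including the positive homogeneity you would otherwise still need to justify.
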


\begin{proof}
For $r \in \R$, put $\nu(r) := d(\xi(0),\xi'(r))$. 
We have $d(\xi(R),\xi'(R+r)) = \nu(r)$ for every $R\in\R$ since the left 
hand side is a convex non-negative bounded function of $R$, hence constant. 
We claim that for every pair of points $p = (s,t)$ and 
$p' = (s+\Del s,t+\Del t)$ in $\R\times [0,1]$ we have
\begin{equation} \label{eq:nu}
d(f(p),f(p')) =
\begin{cases} 
| \Del t | \, \nu\bigl(\tfrac{\Del s}{\Del t}\bigr) 
& \text{if $\Del t \neq 0$,} \\
| \Del s | 
& \text{if $\Del t = 0$.}
\end{cases}
\end{equation}
There is no loss of generality in assuming $\Del t \ge 0$.
Suppose first that $\Del t > 0$, and put $r := \frac{\Del s}{\Del t}$.
Let $q := (s-tr,0)$ and $q' := (s+(1-t)r,1)$ denote the points
where the line through $p,p'$ intersects $\R \times \{0\}$ and 
$\R \times \{1\}$. Then 
\begin{equation} \label{eq:qq'}
d(f(q),f(q')) = d(\xi(s-tr),\xi'(s-tr+r)) = \nu(r).
\end{equation}
We put $\eta := \eta_s$ and $\eta' := \eta_{s+\Del s}$.
By convexity, we get that 
\begin{equation}
d(f(q),f(p)) = d(\xi(s-tr),\eta(t)) 
\le t\, d(\xi(s-r),\eta(1)) = t\, \nu(r).
\end{equation}
Likewise, we have
\begin{equation}
d(f(p'),f(q')) \le (1-t-\Del t)\, \nu(r)
\end{equation}
as well as $d(\eta(0),\eta'(\Del t)) \le \Del t\, \nu(r)$
and $d(\eta(1-\Del t),\eta'(1)) \le \Del t\, \nu(r)$. Hence, by the convexity
of $\lam \mapsto d(\eta(\lam),\eta'(\lam + \Del t))$ on $[0,1-\Del t]$,
also
\begin{equation} \label{eq:pp'}
d(f(p),f(p')) = d(\eta(t),\eta'(t + \Del t)) \le \Del t\, \nu(r).
\end{equation}
From \eqref{eq:qq'}--\eqref{eq:pp'} and the triangle inequality
we see that all inequalities derived so far are in fact 
equalities. In view of~\eqref{eq:pp'}, this shows in particular the 
first part of~\eqref{eq:nu}. The second case follows by continuity from 
the first, since $\bigl| |r| - \nu(0) \bigr| \le \nu(r) \le \nu(0) + |r|$ 
for all $r \in \R$ and hence
\[
\lim_{\Del t \to 0} |\Del t|\, \nu\bigl(\tfrac{\Del s}{\Del t}\bigr) = |\Del s|. 
\]
Now, to conclude the proof, note that $\nu(r) > 0$ for all $r \in \R$, 
as $\xi$ and $\xi'$ have disjoint images.
It then follows readily from~\eqref{eq:nu} that there is a norm $\|\cdot\|$
on $\R^2$ such that $d(f(p),f(p')) = \|p' - p\|$ for all 
$p,p' \in \R \times [0,1]$. Note that the triangle inequality for 
$\|\cdot\|$ is just inherited from $X$.
\end{proof}

The following example shows that, in general, if we replace $\xi$ by 
$s \mapsto \xi(s + a)$ for some $a \ne 0$, we may get a different strip 
in $X$.

\begin{Expl} \label{Expl:diff-strips}
Define piecewise affine functions $g,\b g \colon \R \times [0,1] \to \R$ 
such that 
\[
g(s,t) = \begin{cases}
\tfrac12t & \text{if $s \le 0$,}\\
\tfrac12|s-t| & \text{if $0 \le s \le 1$,}\\
\tfrac12(1-t) & \text{if $s \ge 1$,}
\end{cases}
\]
and $\b g(s,t) = \frac12 - g(s,1-t)$. Note that $g = \b g$ outside of 
$(0,1)^2$, whereas the graphs of $g$ and $\b g$ over $[0,1]^2$
bound a simplex in $\R^3$. 
Consider the sets $Y = \R \times [0,1] \times \R$ and   
\[
X := \{(s,t,u) \in Y: g(s,t) \le u \le \b g(s,t)\},
\]
both equipped with the metric induced by the maximum norm on $\R^3$.
Let $\b\sig \colon (x,y,\lam) \mapsto (1-\lam)x + \lam y$ be the 
canonical bicombing on $Y$. The vertical retraction 
$\pi \colon (s,t,u) \mapsto (s,t,\min\{\max\{u,g(s,t)\},\b g(s,t)\})$ 
from $Y$ onto $X$ is $1$-Lipschitz.
It follows from results in~\cite{DesL} (see Lemma~2.1, Theorem~1.1, 
the observation at the end of Sect.~2, and Theorem~1.2) that 
$\tilde\sig := \pi \circ \b\sig|_{X \times X \times [0,1]}$ is a bicombing on $X$
and that $X$ possesses a unique consistent bicombing~$\sig$, which
furthermore satisfies $\sig_{xy} = \tilde\sig_{xy}$ whenever the consistency 
condition~(iv) stated in the introduction holds for $\tilde\sig_{xy}$. 
In particular, the geodesics $\xi,\xi' \colon \R \to X$ defined by 
$\xi(s) := (s,0,g(s,0))$ and $\xi'(s) := (s,1,g(s,1))$ are two (parallel)
$\sig$-lines. It is then not difficult to see that the strip formed
by the segments $\sig_{\xi(s)\xi'(s+1)}$ corresponds to the graph of $g$, 
whereas the segments $\sig_{\xi(s+1)\xi'(s)}$ trace out the graph of $\b g$. 
\end{Expl}

We also see that in Proposition~\ref{Prop:flat-strip}, 
for fixed $t \in (0,1)$, the lines $s \mapsto f(s,t)$ need not be 
$\sig$-lines in general: 
clearly the lines $s \mapsto \sig_{\xi(s)\xi'(s+1)}(\frac12)$ and 
$s \mapsto \sig_{\xi(s+1)\xi'(s)}(\frac12)$ in the above example cannot both be 
$\sig$-lines. However, it is not difficult to deduce the following result.

\begin{Thm}[Flat strips] \label{Thm:flat-strips}
Let $X$ be a metric space with a consistent bicombing $\sig$, and let
$\xi,\xi' \colon \R \to X$ be two parallel $\sig$-lines with disjoint traces.
Then there exists a norm on $\R^2$ such that the following assertions hold
for the metric it induces on $\R \times [0,1]$:
\begin{enumerate}
\item[\rm (1)]
For every $a \in \R$, the map $f_a \colon \R \times [0,1] \to X$ satisfying
$f_a(s+ta,t) = \sig_{\xi(s)\xi'(s+a)}(t)$ for all $(s,t) \in \R \times [0,1]$ 
is an isometric embedding.
\item[\rm (2)]
If, in addition, $X$ is proper, there also 
exists an isometric embedding $f \colon \R \times [0,1] \to X$ such that 
$f(\cdot,0) = \xi$, $f(\cdot,1) = \xi'$, and $s \mapsto f(s,t)$ is a 
$\sig$-line parallel to $\xi$ and $\xi'$ for every fixed $t \in (0,1)$.
\end{enumerate}   
\end{Thm}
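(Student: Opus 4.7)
For part (1), the plan is to apply Proposition~\ref{Prop:flat-strip} to the pair of parallel $\sig$-lines $\xi$ and $\xi'_a(s) := \xi'(s+a)$, together with the connecting geodesics $\eta_s := \sig_{\xi(s)\xi'(s+a)}$. Coherence of the family $\{\xi,\xi'_a\}\cup\{\eta_s:s\in\R\}$ is inherited from the consistency of $\sig$: for any two $\sig$-segments $\sig_{xy}$ and $\sig_{x'y'}$, the function $t\mapsto d(\sig_{xy}(t),\sig_{x'y'}(t))$ is midpoint-convex (apply property~(iii) to the sub-$\sig$-segments $\sig_{\sig_{xy}(r)\sig_{xy}(s)}$ and $\sig_{\sig_{x'y'}(r)\sig_{x'y'}(s)}$), hence convex; sub-geodesics of $\sig$-lines are handled similarly. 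Proposition~\ref{Prop:flat-strip} then supplies a norm $\|\cdot\|_a$ on $\R^2$ making $\tilde f_a(s,t):=\eta_s(t)$ an isometric embedding with $\|(\Del s,\Del t)\|_a = |\Del t|\,\nu(a+\Del s/\Del t)$ for $\Del t\ne 0$, where $\nu(r):=d(\xi(0),\xi'(r))$. Reparameterizing via $\tau:=s+ta$, the map $f_a(\tau,t)=\tilde f_a(\tau-ta,t)$ is isometric for the pullback norm, and a direct substitution yields $\|(p,q)\| = |q|\,\nu(p/q)$ for $q\ne 0$, with continuous extension $\|(p,0)\|=|p|$ via $\nu(r)/|r|\to 1$ as $|r|\to\infty$ (from $|r|-\nu(0)\le \nu(r)\le |r|+\nu(0)$). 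Crucially, this formula is independent of $a$, so a single norm serves every $f_a$.

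For part (2), I fix a sequence $a_n\to\infty$ and exploit properness. The embeddings $f_{a_n}$ from part~(1) are all isometric for the same norm, which is equivalent to the Euclidean norm on $\R^2$; in particular they are uniformly Lipschitz and share the basepoint $f_{a_n}(0,0)=\xi(0)$. Arzel\`a--Ascoli combined with properness of $X$ thus yields a subsequence converging locally uniformly to an isometric embedding $f\colon\R\times[0,1]\to X$, which satisfies $f(\cdot,0)=\xi$ and $f(\cdot,1)=\xi'$ because each $f_{a_n}$ does. Setting $\zeta_t(s):=f(s,t)$, the identity $d(\zeta_t(s),\xi(s))=\|(0,t)\|=t\,\nu(0)$ is independent of $s$, so $\zeta_t$ is a unit-speed line parallel to both $\xi$ and $\xi'$.

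The main obstacle is to verify that each $\zeta_t$ with $t\in(0,1)$ is actually a $\sig$-line. The idea is to rescale long diagonal $\sig$-segments inside the strips $f_{a_n}$. For each $n$ set $\gam_n := \sig_{\xi(-ta_n)\xi'((1-t)a_n)}$; this is a $\sig$-segment of length $\nu(a_n)$ whose image in $f_{a_n}$ is the diagonal $t'\mapsto(a_n(t'-t),t')$ in strip coordinates, passing through $(0,t)$ at $t'=t$. For fixed $L>0$ and $n$ large, the restriction of $\gam_n$ to $[t-L/(2\nu(a_n)),\,t+L/(2\nu(a_n))]$ is, by consistency, the $\sig$-segment of length $L$ between its endpoints; after arc-length parameterization on $[-L/2,L/2]$ call it $\tilde\gam_n$. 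In strip coordinates $\tilde\gam_n(u)=f_{a_n}(a_n u/\nu(a_n),\,t+u/\nu(a_n))$, and since $\nu(a_n)/a_n\to 1$ and $1/\nu(a_n)\to 0$, the coordinate difference from $(u,t)$ vanishes uniformly in $u\in[-L/2,L/2]$; combined with $f_{a_n}(u,t)\to\zeta_t(u)$ this yields $\tilde\gam_n\to\zeta_t|_{[-L/2,L/2]}$ uniformly. On the other hand, property~(iii) makes $\sig$ Lipschitz in its endpoints, so $\tilde\gam_n$ also converges uniformly to the arc-length parameterized $\sig$-segment between $\zeta_t(-L/2)$ and $\zeta_t(L/2)$. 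Identifying the two limits forces $\zeta_t|_{[-L/2,L/2]}$ to be that $\sig$-segment, and since $L$ was arbitrary, $\zeta_t$ is a $\sig$-line.
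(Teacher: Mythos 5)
Your proof is correct and follows exactly the route the paper sketches (and deliberately omits the details of): part (1) via Proposition~\ref{Prop:flat-strip} applied to $\xi$ and $\xi'(\cdot+a)$, with the reparametrization $\tau=s+ta$ showing that the induced norm $\|(p,q)\|=|q|\,\nu(p/q)$ is independent of $a$, and part (2) by an Arzel\`a--Ascoli limit of the $f_{a_n}$ as $a_n\to\infty$. The one point the paper leaves entirely to the reader---that the horizontal lines of the limit strip are $\sig$-lines---is handled correctly by your rescaled-diagonal argument, which uses consistency to identify the restricted $\gam_n$ with the $\sig$-segment between its endpoints and property~(iii) to pass $\sig$-segments to the limit.
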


For a corresponding (but easier) result in the case of Busemann spaces, 
see Lemma~1.1 and the remark thereafter in~\cite{Bow} (compare also 
Proposition~5.3 in~\cite{FoeLS}). 
Part~(1) is a direct consequence of Proposition~\ref{Prop:flat-strip}, and 
(2) then follows by a simple limit argument (notice that in~(1),  
all $f_a$ satisfy $f_a(\cdot,0) = \xi$ and $f_a(\cdot,1) = \xi'$).
As this result will not be used in the sequel, we omit the details.

We now proceed to an existence result for embedded flat half-planes, which 
will be instrumental in the proof of Theorem~\ref{Thm:intro-flat-plane}. 
We need the following analogue of the Tits cone in the case of 
$\CAT(0)$ spaces.   
Let $\cR$ be a coherent collection of rays in $X$, and denote by 
$\cR(\infty)$ the set of equivalence classes of mutually asymptotic rays
in $\cR$. For $(a,\xi),(b,\eta) \in \R_+ \times \cR$, we put
\[
d_\infty((a,\xi),(b,\eta)) 
:= \lim_{\lam \to \infty} \frac1\lam\, d(\xi(a\lam),\eta(b\lam)).
\]
Note that the limit exists by convexity,
and $|a - b| \le d_\infty((a,\xi),(b,\eta)) \le a + b$. 
This defines a pseudometric $d_\infty$ on $\R_+ \times \cR$, and the respective 
quotient metric space is a metric cone over $\cR(\infty)$
(compare~\cite{Bal}, p.~38). In particular, for $a > 0$, 
$d_\infty((a,\xi),(a,\eta)) = a\, d_\infty((1,\xi),(1,\eta))$, and this is zero 
if and only $\xi$ and $\eta$ are asymptotic.
The following result should now be compared with Proposition~II.4.2 
in~\cite{Bal} and Proposition~II.9.8 and Corollary~II.9.9 in~\cite{BriH}.

\begin{Prop} [Flat half-plane] \label{Prop:flat-half-plane}
Let $X$ be a metric space. Suppose that $\{\xi\} \cup \{\eta_s: s \in \R\}$ 
is a coherent collection of geodesics in $X$, where $\xi$
is a line and every $\eta_s$ is a ray with 
$\eta_s(0) = \xi(s)$ asymptotic to $\eta := \eta_0$. 
Then, for all $a,b > 0$, the function $s \mapsto d(\xi(s+a),\eta_s(b))$ 
is non-decreasing on $\R$ with limit
\begin{equation} \label{eq:lim-lim}
\lim_{s \to \infty} d(\xi(s+a),\eta_s(b)) = d_\infty((a,\xi),(b,\eta)).
\end{equation}
Furthermore, if for every $a \in \R$ the function $s \mapsto
d(\xi(s+a),\eta_s(1))$ is constant on $\R$ and nonzero, then the map
\[
f \colon \R \times \R_+ \to X, \quad f(s,t) := \eta_s(t),
\]
is an isometric embedding with respect to the metric on $\R \times \R_+$ 
induced by some norm on $\R^2$.
\end{Prop}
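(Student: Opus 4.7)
The plan is to handle the two claims in the proposition separately, first proving the asymptotic statement for $\phi(s):=d(\xi(s+a),\eta_s(b))$ and then, under the additional constancy hypothesis, constructing the flat half-plane. My starting point for Part~(1) is the convex function
\[
\psi_s(\lambda):=d(\xi(s+a\lambda),\eta_s(b\lambda))
\]
on $[0,\infty)$; its convexity comes from the coherent pair $\{\xi,\eta_s\}$, and $\psi_s(0)=0$, so the quotient $\psi_s(\lambda)/\lambda$ is non-decreasing. The function $t\mapsto d(\eta_s(t),\eta(t))$ is convex, bounded (since $\eta$ and $\eta_s$ are asymptotic), and equal to $|s|$ at $t=0$; as a bounded convex function on $\R_+$ it is non-increasing, hence $\le|s|$ throughout. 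This gives $|\psi_s(\lambda)-d(\xi(a\lambda),\eta(b\lambda))|\le 2|s|$ and so $\psi_s(\lambda)/\lambda\to d_\infty((a,\xi),(b,\eta))$ as $\lambda\to\infty$. In particular $\phi(s)=\psi_s(1)\le d_\infty((a,\xi),(b,\eta))$.

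For the monotonicity and the precise limit~\eqref{eq:lim-lim}, I would fix $s_1<s_2$, set $h:=s_2-s_1$, and consider the convex function
\[
F(\lambda):=d(\xi(s_1+a+\lambda h),\eta_{s_1}(b)),\qquad \lambda\in\R,
\]
obtained from coherence of $\{\xi,\eta_{s_1}\}$ by extending the affine reparametrization of $\xi$ to the whole line. The function $F$ is $h$-Lipschitz with asymptotic slopes $\pm h$ at $\pm\infty$, so $F(\lambda)\ge h\lambda+C^+$ and $F(\lambda)\ge-h\lambda+C^-$ for $C^\pm:=\inf_\lambda(F(\lambda)\mp h\lambda)$. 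Combining these tangent-line bounds with $|F(1)-\phi(s_2)|\le d(\eta_{s_1}(b),\eta_{s_2}(b))\le h$ (from convexity of $t\mapsto d(\eta_{s_1}(t),\eta_{s_2}(t))$, which is non-increasing with value $h$ at $t=0$) should yield $\phi(s_1)\le\phi(s_2)$. The limit formula then follows from the monotonicity, the upper bound, and taking $\lambda$ and $s$ both large in $\psi_s(\lambda)/\lambda\to d_\infty$.

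For Part~(2), I would assume constancy at $b=1$ and write $\rho(a):=d(\xi(s+a),\eta_s(1))$, which equals $d_\infty((a,\xi),(1,\eta))$ by Part~(1). The first key step is to extend constancy to every $b>0$: the cone identity $d_\infty((a,\xi),(b,\eta))=b\,d_\infty((a/b,\xi),(1,\eta))$ gives $\psi_s(1/b)/(1/b)=b\rho(a/b)=d_\infty((a,\xi),(b,\eta))$, so the non-decreasing quotient $\psi_s(\lambda)/\lambda$ already attains its limiting value $d_\infty$ at $\lambda=1/b$; since it is bounded above by that same value, it must be identically $d_\infty$, forcing $\psi_s$ to be linear. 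Thus $N_0(a,b):=d(\xi(s+a),\eta_s(b))=d_\infty((a,\xi),(b,\eta))$ is independent of $s$. The triangle inequality $d(\eta_{s_1}(t),\eta_{s_2}(t))\ge N_0(u-s_2,t)-N_0(u-s_1,t)$ with $u\to\infty$ then yields $d(\eta_{s_1}(t),\eta_{s_2}(t))\ge|s_1-s_2|$, matching the upper bound from coherence; hence for each $t>0$ the map $\xi_t:s\mapsto\eta_s(t)$ is an isometry $\R\to X$ parallel to $\xi$ at distance $t$. I then apply Proposition~\ref{Prop:flat-strip} to $\xi$ and $\xi_t$ with the rescaled segments $\eta_s|_{[0,t]}$ as connecting geodesics: the pair $(\xi,\xi_t)$ is coherent because $d(\xi(u_1),\xi_t(u_2))=N_0(u_1-u_2,t)$ and $N_0(\cdot,t)$ is convex (a pointwise limit of convex functions), while the remaining pair coherences are inherited from the ambient family. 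Each such strip carries some norm $\|\cdot\|_t$ on $\R^2$; since nested strips must induce the same norm by positive homogeneity, they all coincide, and the union produces the required embedding $f:\R\times\R_+\to X$.

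The hardest step will be the monotonicity in Part~(1): a direct triangle-inequality argument yields only that $\phi$ is $2$-Lipschitz, and the sharper statement $\phi(s_1)\le\phi(s_2)$ has to be extracted from the fine interplay between the convex function $F$ and its Busemann-type tangent constants $C^\pm$. This is where the bulk of the technical care will lie.
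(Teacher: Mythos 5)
Your Part~(1) breaks down at exactly the step you flag as the hardest. The ingredients you propose for monotonicity --- convexity of $F(\lambda)=d(\xi(s_1+a+\lambda h),\eta_{s_1}(b))$ along $\xi$, the tangent constants $C^{\pm}=\inf_\lambda(F(\lambda)\mp h\lambda)$, the bound $|F(1)-\phi(s_2)|\le h$, and $\phi\le d_\infty$ --- are jointly consistent with $\phi(s_1)>\phi(s_2)$: take a convex $h$-Lipschitz $F$ with $F(0)=10$, $F(1)=9.5$, $h=1$; then $\phi(s_2)$ is only constrained to lie in $[8.5,10.5]$, and the tangent-line bounds give $F(0)\ge C^+$ and $\phi(s_2)\ge F(1)-h\ge C^+$, i.e.\ both quantities are bounded \emph{below} by the same constant, which compares nothing. (To close the argument this way you would need $F(1)-F(0)\ge h$, i.e.\ that $F$ has maximal slope already at $0$, which fails even in the Euclidean plane with $\eta$ perpendicular to $\xi$.) The missing idea is a comparison along $\eta$ rather than along $\xi$: the paper bounds $d(\eta_{ar}(b),\eta(br+b))\le d(\eta_{ar}(0),\eta(br))=d(\xi(ar),\eta(br))$ --- the distance between the asymptotic rays $\eta_{ar}$ and $t\mapsto\eta(br+t)$, matched with the correct parameter \emph{shift}, is non-increasing --- and then uses convexity of $\lambda\mapsto d(\xi(a\lambda),\eta(b\lambda))$ from the common basepoint $\xi(0)=\eta(0)$ to compare the increment $d(\xi(a(r+1)),\eta(b(r+1)))-d(\xi(ar),\eta(br))$ with $\frac1\lambda d(\xi(a\lambda),\eta(b\lambda))$ for $r\le\lambda\le r+1$. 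Your estimate $d(\eta_s(t),\eta(t))\le|s|$ matches equal parameters and discards precisely this shift, which is why it only yields the (correct) upper bound $\phi(s)\le d_\infty((a,\xi),(b,\eta))$ and the limit of $\psi_s(\lambda)/\lambda$, not the monotonicity or the limit formula \eqref{eq:lim-lim}.

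Part~(2) has a second genuine problem: invoking Proposition~\ref{Prop:flat-strip} with $\xi$ and $\xi_t\colon s\mapsto\eta_s(t)$ requires the pair $(\xi_t,\eta_s)$ to be coherent, i.e.\ convexity of $\lambda\mapsto d(\eta_{\alpha(\lambda)}(t),\eta_s(\beta(\lambda)))$ in which the \emph{index} of the first ray varies with $\lambda$. This is not inherited from coherence of $\{\xi\}\cup\{\eta_s\colon s\in\R\}$ (which only controls pairs with fixed indices), and establishing it amounts to knowing the two-variable distance $d(\eta_{s_1}(t_1),\eta_{s_2}(t_2))$ --- the very formula you are trying to prove --- so the reduction is circular as stated. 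The paper avoids this by computing $d(\eta_s(t),\eta_{s+\Delta s}(t+\Delta t))$ directly from the linearity identity $d(\xi(s+ta),\eta_s(t))=t\,\nu(a)$, sandwiching via the point $q=(s-ta,0)$ on $\xi$ and using only pairs from the ambient family. Two further, more easily repaired, issues: your identification $\rho(a)=d_\infty((a,\xi),(1,\eta))$ via Part~(1) is available only for $a>0$ (for $a\le 0$ the cone point $(a,\xi)$ is undefined and you would need the reversed line), whereas the distance formula requires $\nu(a)$ for all $a\in\R$; and ``the non-decreasing quotient attains its limit at $\lambda=1/b$, hence is identically $d_\infty$'' only forces constancy on $[1/b,\infty)$ --- extending linearity of $\psi_s$ to $[0,1/b]$ needs an extra convexity interpolation (which the paper performs by proving the identity at integer $t$ and using convexity of $t\mapsto d(\xi(s+ta),\eta_s(t))$).
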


\begin{proof}
Let $a,b > 0$. 
First we show that for all $0 < r \le \lam \le r+1$,
\begin{equation} \label{eq:r-lam}
d(\xi(ar + a),\eta_{ar}(b)) \ge \frac1\lam\, d(\xi(a\lam),\eta(b\lam)).
\end{equation}
Since $\eta_{ar}$ and the ray $t \mapsto \eta(br + t)$ are 
asymptotic, we have 
\begin{equation} \label{eq:eta-eta}
d(\eta_{ar}(b),\eta(br + b)) \le d(\eta_{ar}(0),\eta(br)) 
= d(\xi(ar),\eta(br)).
\end{equation}
It follows that 
\begin{align*}
d(\xi(ar + a),\eta_{ar}(b)) 
&\ge d(\xi(ar + a),\eta(br + b)) - d(\xi(ar),\eta(br)) \\
&\ge \biggr(\frac{r+1}{\lam} - \frac{r}{\lam}\biggl) d(\xi(a\lam),\eta(b\lam)),
\end{align*}
which is~\eqref{eq:r-lam}. Putting $\lam = 1$ we get 
$d(\xi(ra + a),\eta_{ra}(b)) \ge d(\xi(a),\eta_0(b))$. 
Likewise, for all $s \in \R$ and $0 < r \le 1$,
\[
d(\xi(s + ra + a),\eta_{s + ra}(b)) \ge d(\xi(s + a),\eta_s(b)),
\]
so $s \mapsto d(\xi(s+a),\eta_s(b))$ is non-decreasing. 
Furthermore, for all $s \in \R$ and $\lam \ge 1$, we have
\[
d(\xi(s + a),\eta_s(b)) 
\le \frac1\lam\, d(\xi(s + a\lam),\eta_s(b\lam)).
\]
Together with~\eqref{eq:r-lam}, this gives~\eqref{eq:lim-lim}.

For the second part of the proposition we have that 
$s \mapsto d(\xi(s+a),\eta_s(1))$ is constant for every $a\in\R$ and 
that these values $\nu(a) := d(\xi(a),\eta_0(1))$ are all positive. 
We first claim that 
\begin{equation} \label{eq:t-nu}
d(\xi(s+ta),\eta_s(t)) = t\,\nu(a)
\end{equation}
for all $t\geq 0$. The left hand side is convex as a function 
of $t$, thus it suffices to show this equality for $0 \le t \in \Z$. 
For $t=0,1$, \eqref{eq:t-nu}~clearly holds. Consequently, by convexity, 
$d(\xi(s+ta),\eta_s(t))\geq t\,\nu(a)$ for all $t > 1$. The reverse inequality
for $1 < t \in \Z$ follows by the triangle inequality since
\[
d(\eta_{s+ka}(t-k),\eta_{s+ka-a}(t-k+1)) \le 
d(\eta_{s+ka}(0),\eta_{s+ka-a}(1)) = \nu(a)
\]
for $k = t,t-1,\dots,1$ (compare~\eqref{eq:eta-eta}). Next, we claim that
for every pair of points $p = (s,t)$ and $p' = (s + \Del s,t + \Del t)$ 
in $\R\times \R_+$ we have
\begin{equation*}
d(f(p),f(p')) = 
\begin{cases} 
| \Del t | \, \nu\bigl(-\tfrac{\Del s}{\Del t}\bigr) 
& \text{if $\Del t \ne 0$,} \\
| \Del s | 
& \text{if $\Del t = 0$,}
\end{cases}
\end{equation*} 
similarly as in the proof of Proposition~\ref{Prop:flat-strip}.
To show this, suppose without loss of generality that $\Del t \ge 0$. 
Let first $\Del t > 0$, and put $a := \frac{\Del s}{\Del t}$ and 
$q := (s - ta,0)$. Then~\eqref{eq:t-nu} yields
\begin{align*}
d(f(p),f(p')) 
&= d(\eta_s(t),\eta_{s + \Del s}(t + \Del t)) \\
&\le d(\eta_s(0),\eta_{s + \Del s}(\Del t)) \\
&= \Del t\, \nu(-a)
\end{align*}
as well as
$d(f(q),f(p)) = t\, \nu(-a)$ and $d(f(q),f(p')) = (t + \Del t)\, \nu(-a)$.
This gives $d(f(p),f(p')) = \Del t\, \nu(-a)$, as claimed. 
The rest of the proof follows as in Proposition~\ref{Prop:flat-strip}.
\end{proof}


\section{Flat Planes} \label{Sect:flat-planes}

We now turn to Theorem~\ref{Thm:intro-flat-plane}.
Recall that a metric space $X$ is {\em $\del$-hyperbolic}, for some constant 
$\del \ge 0$, if for every quadruple $(w,x,y,z) \in X^4$, 
\[
d(w,y) + d(x,z) \le \max\{d(w,x) + d(y,z), d(w,z) + d(x,y)\} + 2\del. 
\]
If such a $\del$ exists, $X$ is said to be {\em hyperbolic}. 
As is well known, for a geodesic metric space this is equivalent to saying 
that geodesic triangles are slim, in an appropriate sense. 
It also suffices to consider triangles whose sides are given by a fixed 
bicombing:

\begin{Lem} \label{Lem:hyp}
Let $X$ be a metric space with a map that selects for every pair of points 
$x,y \in X$ a geodesic segment $[x,y] = [y,x] \sub X$ connecting them.
If for every triple $(x,y,z) \in X^3$ the segment $[x,z]$ is contained in 
the closed $\frac{\del}{2}$-neighborhood of $[x,y] \cup [y,z]$, 
then $X$ is $\del$-hyperbolic.
\end{Lem}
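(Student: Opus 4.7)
The plan is to verify the four-point inequality in the definition of $\del$-hyperbolicity directly. The key move is to apply the slim-triangle hypothesis to the two triangles $(x,y,z)$ and $(x,w,z)$ that share the side $[x,z]$, and then to chain triangle inequalities along a short zigzag. To start, fix $w,x,y,z \in X$ and produce a ``transition point'' $p \in [x,z]$ lying within $\del/2$ of both $[x,y]$ and $[y,z]$: the hypothesis applied to $(x,y,z)$ says that the two closed subsets $\{q \in [x,z]: d(q,[x,y]) \le \del/2\}$ and $\{q \in [x,z]: d(q,[y,z]) \le \del/2\}$ cover $[x,z]$ and contain the endpoints $x$ and $z$ respectively, so they meet by connectedness. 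Applying the hypothesis also to $(x,w,z)$, the same $p$ lies within $\del/2$ of $[x,w]$ or of $[w,z]$, which splits the remaining argument into two symmetric subcases.

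In the subcase $d(p,[w,z]) \le \del/2$, I would pick $s \in [w,z]$ and $t \in [x,y]$ with $d(p,s),d(p,t) \le \del/2$, so $d(s,t) \le \del$. Then the triangle inequalities along the zigzag paths $w \to s \to t \to y$ and $x \to t \to s \to z$, combined with the segment identities $d(x,t)+d(t,y) = d(x,y)$ and $d(w,s)+d(s,z) = d(w,z)$, yield $d(w,y) + d(x,z) \le d(w,z) + d(x,y) + 2\del$ after one line of algebra. The other subcase, $d(p,[x,w]) \le \del/2$, is entirely symmetric: I pair $s \in [x,w]$ with $t \in [y,z]$ and run the analogous chains to obtain $d(w,y) + d(x,z) \le d(w,x) + d(y,z) + 2\del$. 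In either case the appropriate term in the maximum on the right-hand side of the four-point inequality dominates.

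I do not foresee a real obstacle. The one insight required is to apply the slim condition to \emph{both} triangles sharing the side $[x,z]$ and to select the ``compatible'' pair $(s,t)$ whose endpoints $\{x,y\},\{w,z\}$ (or $\{y,z\},\{x,w\}$) are exactly those needed to combine the two segment identities into the matching side of the four-point maximum; the existence of the transition point $p$ and the zigzag bookkeeping are entirely routine.
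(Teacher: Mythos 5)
Your proposal is correct and follows essentially the same route as the paper: both apply the slim-triangle hypothesis to the two triangles on $y$ and on $w$ over the common side $[x,z]$, locate a point of $[x,z]$ simultaneously close to one side of each triangle (your transition point $p$ just makes the paper's connectedness step explicit), and then run the same zigzag triangle inequalities to land on the matching term of the four-point maximum. No gaps.
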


\begin{proof}
Let $(w,x,y,z) \in X^4$.
The union of the closed $\frac{\del}{2}$-neighborhoods of 
$[x,y]$ and $[y,z]$ contains $[x,z]$. Likewise, the closed 
$\frac{\del}{2}$-neighborhoods of $[z,w]$ and $[w,x]$ cover $[x,z]$. 
It follows that there is either a pair of points 
$x' \in [x,y]$ and $z' \in [z,w]$ with $d(x',z') \le \del$ or a pair of points
$y' \in [y,z]$ and $w' \in [w,x]$ with $d(y',w') \le \del$. In the first case,
\begin{align*}
d(w,y) + d(x,z) 
&\le \bigl(d(w,z') + \del + d(x',y)\bigr) 
+ \bigl(d(x,x') + \del + d(z',z)\bigr) \\
&= d(w,z) + d(x,y) + 2\del.
\end{align*}
Similarly, in the second case, $d(w,y) + d(x,z) \le d(w,x) + d(y,z) + 2\del$.
\end{proof}

In particular, a non-hyperbolic $X$ with a bicombing $\sig$ contains a 
sequence of fatter and fatter $\sig$-triangles. The following argument
then uses a ruled surface construction together with the cocompact isometric 
action to produce a collection of mutually asymptotic rays as in 
Proposition~\ref{Prop:flat-half-plane}. This differs from the strategy 
in~\cite{Bow} and is inspired by the proof for $\CAT(0)$ spaces 
in~\cite{Bri,BriH}, although we make no use of angles.

\begin{proof}[Proof of Theorem~\ref{Thm:intro-flat-plane}]
Let $X$ be a proper, cocompact metric space with a consistent 
bicombing~$\sig$. If $X$ contains an isometrically embedded normed plane,
then clearly $X$ cannot be hyperbolic.

Suppose now that $X$ is not hyperbolic. We show that then $X$ must contain 
an embedded normed plane. We continue to write $[x,y]$ in place of 
$\im(\sig_{xy})$. By Lemma~\ref{Lem:hyp} there are sequences of points 
$y^1_n,y^2_n,y^3_n \in X$ and $p_n \in [y^1_n,y^3_n]$ such that 
\begin{equation} \label{eq:n-ball}
B(p_n,n) \cap ([y^1_n,y^2_n] \cup [y^2_n,y^3_n]) = \es
\end{equation} 
for all integers $n \ge 1$, where $B(p_n,n)$ denotes the closed ball at $p_n$
of radius $n$. Put $r_n(\cdot) := d(p_n,\cdot)$. 
For $i = 1,2$, let $x^i_n$ be a point in $[y^i_n,y^{i+1}_n]$ with minimal 
distance to $p_n$, and let $\xi^i_n \colon [0,r_n(x^i_n)] \to X$ be a 
unit speed parametrization of the segment $[p_n,x^i_n]$ from $p_n$ to $x^i_n$.
Then, for every pair $(i,j) \in \{(1,1),(1,2),(2,2),(2,3)\}$, we define 
the ``ruled surface''
\[
\Del^{\!i,j}_n \colon [0,r_n(x^i_n)] \times [0,r_n(y^j_n)] \to X
\]
so that $\Del^{\!i,j}_n(\cdot,0) = \xi^i_n$ and, for each $s \in [0,r_n(x^i_n)]$, 
$\Del^{\!i,j}_n(s,\cdot)$ is a constant speed parametrization of the segment
$[\xi^i_n(s),y^j_n]$ from $\xi^i_n(s)$ to $y^j_n$. Thus 
\[
d(\Del^{\!i,j}_n(s,t),\Del^{\!i,j}_n(s,t')) 
= \frac{d(\xi^i_n(s),y^j_n)}{r_n(y^j_n)} |t-t'| \le 2|t-t'|,
\]
because $d(\xi^i_n(s),y^j_n) \le r_n(x^i_n) + r_n(y^j_n) \le 2\,r_n(y^j_n)$ 
by the choice of $x^i_n$. Note also that by convexity,
\begin{equation} \label{eq:s-s'}
d(\Del^{\!i,j}_n(s,t),\Del^{\!i,j}_n(s',t)) \le 
d(\xi^i_n(s),\xi^i_n(s')) = |s-s'|.
\end{equation}
It follows that each $\Del^{\!i,j}_n$ is $2$-Lipschitz, where here and 
below we equip $\R^2$ with the $l_1$-metric. 
Furthermore, putting $s' := r_n(x^i_n)$, we notice that
for $0 \le r \le s \le s'$ and $0 \le t \le r_n(y^j_n)$,
\begin{align} \label{eq:s-r}
d(\xi^i_n(r),\Del^{\!i,j}_n(s,t)) 
&\ge r_n(\Del^{\!i,j}_n(s',t)) - r - d(\Del^{\!i,j}_n(s,t),\Del^{\!i,j}_n(s',t)) 
\nonumber \\
&\ge r_n(x^i_n) - r - (s'-s) \nonumber \\
&= s - r 
\end{align}
by the triangle inequality, the choice of $x^i_n$, and~\eqref{eq:s-s'}. 

Now we choose a sequence of isometries $\gam_n$ of $X$ so that 
$\gam_n(p_n) \in K$ for all $n$ and for some fixed compact set $K$. 
By~\eqref{eq:n-ball}, $r_n(x^i_n),r_n(y^j_n) > n$. Since $X$ is proper,
we can extract a sequence $n(k)$ so that each of the four
sequences $\gam_{n(k)} \circ \Del^{\!i,j}_{n(k)}$ converges uniformly on compact 
sets, as $k \to \infty$, to a $2$-Lipschitz map
\[
f^{i,j} \colon \R_+ \times \R_+ \to X
\]
with boundary rays $\xi^i := f^{i,j}(\cdot,0)$ and 
$\eta^j := f^{i,j}(0,\cdot)$. Furthermore, for every $s \in \R_+$,
$\eta^{i,j}_s := f^{i,j}(s,\cdot)$ is a ray asymptotic to $\eta^j$,
so $f^{i,j}$ is in fact $1$-Lipschitz. 
Clearly $\{\xi^i\} \cup \{\eta^{i,j}_s : s \in \R_+\}$ is a coherent
collection of rays. From the construction we also have that 
$d(\eta^1(t),\eta^3(t)) = 2t$ for all $t \ge 0$, in particular
$\eta^1,\eta^3$ are non-asymptotic.   
Hence, there is at least one pair $(i,j)$ such that $\xi^i,\eta^j$ 
are non-asymptotic. We put $f := f^{i,j}$, $\xi := \xi^i$, $\eta := \eta^j$,
and $\eta_s := \eta_s^{i,j}$ for some such pair. 
We claim that for all $a \in \R$ and $b > 0$, the limit
\[
L(a,b) := \lim_{s\to\infty} d(\xi(s+a),\eta_s(b)) 
\]
exists and is strictly positive. Clearly $L(0,b) = b$. If $a > 0$, then
$L(a,b) = d_\infty((a,\xi),(b,\eta)) > 0$ by the first part of 
Proposition~\ref{Prop:flat-half-plane} and since $\xi,\eta$ are non-asymptotic.
If $a < 0$, the same result still shows that 
$s \mapsto d(\xi(s+a),\eta_s(b))$ is non-increasing on $[|a|,\infty)$,
so the limit exists, and $L(a,b) \ge |a|$ as a consequence of~\eqref{eq:s-r}.

Next, for every integer $l \ge 1$, we define the $1$-Lipschitz map
\[
f_l \colon [-l,\infty) \times \R_+ \to X, \quad 
f_l(s,t) := f(l+s,t) = \eta_{l+s}(t).
\]
Then we choose isometries $\b\gam_l$ of $X$ so that 
$(\b\gam_l \circ f_l)(0,0) \in K$ for all $l$ and for some fixed compact 
set $K$. As above, there exists a subsequence $l(k)$ such that the sequence 
$\b\gam_{l(k)} \circ f_{l(k)}$ converges uniformly on compact 
sets to a $1$-Lipschitz map
\[
\b f \colon \R \times \R_+ \to X
\]
with boundary line $\b\xi := \b f(\cdot,0)$ and mutually asymptotic
rays $\b\eta_s := \b f(s,\cdot)$ for $s \in \R$. Again, 
$\{\b\xi\} \cup \{\b\eta_s : s \in \R\}$ is a coherent collection of 
geodesics. For every $a \in \R$ and $b > 0$, we now
have that 
\[
d(\b\xi(s+a),\b\eta_s(b)) = L(a,b) > 0
\] 
for all $s \in \R$. 
Hence, by Proposition~\ref{Prop:flat-half-plane},
$\b f$ is an isometric embedding with respect to some norm on $\R^2$.
Using once more that $X$ is cocompact, we then conclude that $X$ contains an 
isometrically embedded normed plane. 
\end{proof}

It is clear that if $X$ is a $\CAT(0)$ or a Busemann space, then this property 
is inherited by any isometrically embedded normed plane, thus the corresponding
norm must be Euclidean or strictly convex, respectively. We briefly discuss
another variant of Theorem~\ref{Thm:intro-flat-plane}, which happens 
to have a very short proof, without reference to bicombings. Recall that 
a metric space $X$ is {\em injective} (as an object in the metric category with 
$1$-Lipschitz maps as morphisms), if for every metric space $B$ and every 
$1$-Lipschitz map $f \colon A \to X$ defined on a set $A \sub B$ there is 
$1$-Lipschitz extension $\olf \colon B \to X$. By a remarkable result
of Isbell~\cite{Isb}, every metric space $Y$ has an injective hull $\E(Y)$,
thus every isometric embedding of $Y$ into an injective 
metric space $X$ factors as $Y \sub \E(Y) \to X$ (see Sects.~2 and~3 
in~\cite{Lan} for a survey). 

Let now $Q = \{w,x,y,z\}$ be any metric space of cardinality four, and suppose 
that $c := d(w,y) + d(x,z)$ is not less than the maximum of
$a := d(w,x) + d(y,z)$ and $b := d(w,z) + d(x,y)$. The injective hull 
(or the {\em tight span}~\cite{Dre}) of $Q$ is isometric to the 
(possibly degenerate) rectangle
$[0,\frac12(c-a)] \times [0,\frac12(c-b)]$ in $(\R^2,\|\cdot\|_1)$ with 
four segments of appropriate lengths attached at the corners, where the 
terminal points of these segments correspond to $Q$. (See Fig.~A1 on
p.~336 in~\cite{Dre}. It is also worth pointing out that the $1$-skeleton 
of the tight span of $Q$, viewed as polyhedral complex, is the unique optimal
network realizing the metric of $Q$; see p.~325 in the same paper.)
Now the $\del$-hyperbolicity of $Q$ means precisely that the width 
(the minimum of the two side lengths) of this
$l_1$-rectangle is not bigger than~$\del$. This has the following easy
consequence.
 
\begin{Thm} 
A proper, cocompact injective metric space $X$ is hyperbolic 
if and only if $X$ does not contain an isometric copy of 
$(\R^2,\|\cdot\|_1)$ or, equivalently, of $(\R^2,\|\cdot\|_\infty)$.
\end{Thm}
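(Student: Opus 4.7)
One direction is immediate: both $(\R^2,\|\cdot\|_1)$ and $(\R^2,\|\cdot\|_\infty)$ contain $4$-point subsets of unbounded hyperbolicity constant (the vertices of arbitrarily large squares), so any $X$ containing either cannot be hyperbolic. The equivalence between containing $(\R^2,\|\cdot\|_1)$ and $(\R^2,\|\cdot\|_\infty)$ is witnessed by the linear isometry $(u,v)\mapsto(u+v,u-v)$ between these two normed planes.

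For the converse, suppose $X$ is not hyperbolic. For every integer $n\ge 1$, pick a $4$-point subset $Q_n\sub X$ whose hyperbolicity defect exceeds~$n$. By the discussion preceding the theorem, the injective hull $\E(Q_n)$ is isometric to an $l_1$-rectangle with four segments attached at the corners, and its width (the smaller side length) exceeds~$n$. In particular $\E(Q_n)$ contains an isometric copy of the $l_1$-square $[0,n]\times[0,n]$. Since $X$ is injective, the inclusion $Q_n\hookrightarrow X$ extends to a $1$-Lipschitz map $g_n\colon\E(Q_n)\to X$, and this extension is automatically an isometric embedding by the standard ``tightness'' property of injective hulls (cf.~\cite{Isb,Lan}). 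Restricting $g_n$ to the square produces isometric embeddings $\phi_n\colon([0,n]^2,\|\cdot\|_1)\to X$.

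To conclude, pass to a limit using cocompactness and properness. After composing each $\phi_n$ with a suitable isometry of $X$, we may assume that the image of the center $(n/2,n/2)$ lies in a fixed compact set $K\sub X$. Recentering the domain at the origin gives $1$-Lipschitz maps $\psi_n\colon[-n/2,n/2]^2\to X$ sending $0$ into $K$. Since $X$ is proper, Arzel\`a--Ascoli yields a subsequence converging uniformly on compact sets to a $1$-Lipschitz map $\psi\colon\R^2\to X$; passing to the limit in $d(\psi_n(p),\psi_n(p'))=\|p-p'\|_1$ shows that $\psi$ is in fact an isometric embedding of $(\R^2,\|\cdot\|_1)$ into $X$.

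The only nontrivial input is the automatic upgrade of the $1$-Lipschitz extension $g_n$ to an isometric embedding; this is the essential ``tightness'' of the injective hull, which sharply distinguishes this argument from the bicombing case and makes the whole proof a matter of a page. Once granted, the explicit rectangular shape of the injective hull of a $4$-point set together with the standard cocompact limiting procedure does the rest.
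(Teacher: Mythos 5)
Your proof follows the paper's argument exactly: read off a large $l_1$-square inside the injective hull of a quadruple with large hyperbolicity defect, push it isometrically into $X$ via the tight-extension property of $\E(Q)$, and take a cocompact limit using properness. (Only a cosmetic slip: $(u,v)\mapsto(u+v,u-v)$ doubles the norm, so the linear isometry between the $l_\infty$- and $l_1$-planes is $(u,v)\mapsto\tfrac12(u+v,u-v)$.)
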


\begin{proof}
Suppose that $X$ is not hyperbolic. Then, by the above observation, for 
arbitrarily large $\del > 0$ there exists a quadruple $Q \sub X$ 
whose injective hull contains an isometric copy of 
$[0,\del] \times [0,\del] \sub (\R^2,\|\cdot\|_1)$. Since $X$ is injective,
this $l_1$-square embeds isometrically into $X$ by the respective property
of the injective hull. From a sequence of such squares with side lengths 
tending to infinity we obtain an isometric embedding of the entire $l_1$-plane,
using the fact that $X$ is proper and cocompact.
\end{proof}


\section{Semi-simple isometries} \label{Sect:semi-simple-isometries}

In preparation for the proof of Theorem~\ref{Thm:intro-flat-torus}
we now discuss semi-simple isometries of a metric space $X$ with a 
(not necessarily consistent) bicombing~$\sig$.
The main purpose is to establish basic properties regarding 
sets of minimal displacement, analogous to those in the case
of $\CAT(0)$ spaces. Whereas in the latter case a key role is played by 
the projection onto convex subspaces, we use a barycenter map
for finite subsets of $X$ instead, which we first describe.
The same tool will be employed again in Sect.~\ref{Sect:flat-tori}.

In~\cite{EsSH}, Es-Sahib and Heinich introduced an elegant 
barycenter construction for Busemann spaces, which was reviewed and partly 
improved in a recent paper by Navas~\cite{Nav}. The construction and proofs 
translate almost verbatim to spaces with bicombings. 
For finite subsets, the result is as follows.

\begin{Thm} \label{Thm:barycenter}
Let $X$ be a complete metric space with a bicombing~$\sig$. 
For every integer $n \ge 1$ there exists a map $\bary_n\colon X^n\to X$ 
such that
\begin{enumerate}
\item[\rm (1)] 
$\bary_n(x_1,\ldots,x_n)$ lies in the closed $\sig$-convex hull of 
$\{x_1,\ldots,x_n\}$;
\item[\rm (2)] 
$d(\bary_n(x_1,\ldots,x_n),\bary_n(y_1,\ldots,y_n)) \le 
\min_{\pi \in S_n} \frac1n\sum_{i=1}^n d(x_i,y_{\pi(i)})$;
\item[\rm (3)] 
$\gam\bary_n(x_1,\ldots,x_n) = \bary_n(\gam x_1,\ldots,\gam x_n)$ whenever 
$\gam$ is an isometry of $X$ and $\sig$ is $\gam$-equivariant.
\end{enumerate}
\end{Thm}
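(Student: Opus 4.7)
The plan is to construct $\bary_n$ by induction on $n$, following the iterative averaging scheme of Es-Sahib and Heinich \cite{EsSH} as streamlined in Navas \cite{Nav}; the arguments transfer almost verbatim once one replaces unique geodesics by the given bicombing. Set $\bary_1 := \mathrm{id}_X$ and $\bary_2(x,y) := \sigma_{xy}(1/2)$; properties (1)--(3) for $n = 2$ are immediate from axioms (i)--(iii), the symmetric bound in (2) being obtained by also applying (iii) to the reversed combing permitted by axiom (ii).

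For the inductive step, assume $\bary_{n-1}$ has been constructed, satisfies (1)--(3), and is symmetric in its arguments. I would define the ``averaging operator'' $T \colon X^n \to X^n$ by
$$T(x_1, \ldots, x_n)_i := \bary_{n-1}\bigl(x_1, \ldots, \widehat{x_i}, \ldots, x_n\bigr), \qquad i = 1, \ldots, n,$$
iterate $x^{(k)} := T^k(x_1, \ldots, x_n)$, and show that each coordinate sequence $\bigl(x_i^{(k)}\bigr)_k$ is Cauchy and that all $n$ sequences share a common limit; completeness of $X$ then furnishes $\bary_n(x_1, \ldots, x_n)$ as this limit.

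The technical core is the contraction estimate. Set $D_k := \max_{i,j} d\bigl(x_i^{(k)}, x_j^{(k)}\bigr)$. The two $(n-1)$-tuples $\bigl(x_l^{(k)}\bigr)_{l \ne i}$ and $\bigl(x_l^{(k)}\bigr)_{l \ne j}$ share $n-2$ entries and differ in only a single slot, so applying property (2) at level $n-1$ with the matching that pairs equal entries to themselves (and the leftover pair as $x_j^{(k)}$ with $x_i^{(k)}$) yields
$$d\bigl(x_i^{(k+1)}, x_j^{(k+1)}\bigr) \le \frac{1}{n-1}\, d\bigl(x_i^{(k)}, x_j^{(k)}\bigr) \le \frac{D_k}{n-1}.$$
Hence $D_k \le (n-1)^{-k} D_0$, and since property (1) at level $n-1$ places $x_i^{(k+1)}$ in the closed $\sigma$-convex hull of $\{x_l^{(k)} : l \ne i\}$ (so $d\bigl(x_i^{(k)}, x_i^{(k+1)}\bigr) \le D_k$), each sequence is Cauchy with a common limit. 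This combinatorial contraction is the main obstacle of the proof.

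The three conclusions then pass to the limit without difficulty. Property (1) holds because $x_i^{(k+1)}$ always lies in the closed $\sigma$-convex hull of $\{x_1, \ldots, x_n\}$, a closed set. Property (3) is immediate: a $\sigma$-equivariant isometry $\gamma$ commutes with $T$ by the inductive equivariance of $\bary_{n-1}$, hence with the limit. For property (2), applying the inductive bound to iterates of two tuples gives $d\bigl(x_i^{(k+1)}, y_i^{(k+1)}\bigr) \le \tfrac{1}{n-1} \sum_{l \ne i} d\bigl(x_l^{(k)}, y_l^{(k)}\bigr)$; averaging over $i$ shows that $A_k := \tfrac{1}{n} \sum_i d\bigl(x_i^{(k)}, y_i^{(k)}\bigr)$ is non-increasing, and since $d\bigl(x_i^{(k)}, y_i^{(k)}\bigr) \to d(\bary_n(x), \bary_n(y))$ as $k \to \infty$, we obtain $d(\bary_n(x), \bary_n(y)) \le A_0 = \tfrac{1}{n} \sum_i d(x_i, y_i)$. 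Since $T$ intertwines with the $S_n$-action on $X^n$ by the inductive symmetry of $\bary_{n-1}$, the limit $\bary_n$ itself is symmetric in its inputs, and pre-composing $y$ with an arbitrary permutation upgrades this to the minimum bound stated in (2).
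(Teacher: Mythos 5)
Your proposal is correct and follows essentially the same route as the paper, which defines $\bary_n$ for $n\ge 3$ by the fixed-point relation $\bary_n(x_1,\ldots,x_n)=\bary_n(\bary_{n-1}(\bx^1),\ldots,\bary_{n-1}(\bx^n))$, i.e.\ as the common limit of iterating exactly your averaging operator $T$, citing Es-Sahib--Heinich and Navas for the details. Your contraction estimate $D_{k+1}\le D_k/(n-1)$ and the passage to the limit for (1)--(3) are the standard (and correct) way to fill in those details.
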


We shall sometimes suppress the subscript $n$.
The construction is such that $\bary_1(x) := x$, 
$\bary_2(x,y) := \sig_{xy}(\frac12) = \sig_{yx}(\frac12)$, and, for $n \ge 3$,
\[
\bary_n(x_1,\ldots,x_n) = \bary_n(\bary_{n-1}(\bx^1),\ldots,\bary_{n-1}(\bx^n)),
\]
where $\bx^i := (x_1,\ldots,x_{i-1},x_{i+1},\ldots,x_n)$. The proof of 
Theorem~\ref{Thm:barycenter} is then not very difficult. The more profound 
observation of~\cite{EsSH,Nav} is that the above construction can be 
modified so as to yield a barycenter map on the space
of probability measures with finite first moment that is $1$-Lipschitz  
with respect to the $1$-Wasserstein metric. We will not use this more 
elaborate construction in the present paper.

Now we turn to the discussion of isometries.
We begin by recalling some standard terminology and basic facts. 
First, let $X$ be an arbitrary metric space.
For any map $\gam \colon X \to X$ we denote 
by $d_\gam(x) := d(x,\gam(x))$ the {\em displacement} at a point $x \in X$, 
and we put
\[
|\gam|:=\inf_{x\in X} d_\gam(x) \quad \text{and} \quad
\Min(\gam) := \{x \in X: d_\gam(x) = |\gam|\}.
\]
An isometry $\gam$ of $X$ is called {\em parabolic} if $\Min(\gam)$ is empty
and {\em semi-simple} otherwise. In the latter case, $\gam$ is {\em elliptic} 
if $|\gam| = 0$ (that is, $\gam$ has a fixed point) 
and {\em hyperbolic} if $|\gam| > 0$. 

For an isometry $\gam$, a line $\xi \colon \R \to X$ will be called an
{\em axis} of $\gam$ if there exists a $t > 0$ such that
\begin{equation} \label{eq:axis}
\gam(\xi(s)) = \xi(s + t) \quad \text{for all $s \in \R$.}
\end{equation} 
Then, for $x := \xi(0)$ and any $y \in X$, the triangle inequality gives
\begin{align} \label{eq:gam-n}
d(x,\gam^n(x))
&\le d(x,y) + n\, d_\gam(y) + d(\gam^n(y),\gam^n(x)) \nonumber \\
&= n\, d_\gam(y) + 2\, d(x,y)
\end{align}
for all $n \ge 1$, where $d(x,\gam^n(x)) = nt$, thus 
$t \le d_\gam(y)$ and so $d_\gam(x) = t = |\gam|$.
Hence every isometry $\gam$ with an axis is hyperbolic, and all
axes of $\gam$ are contained in $\Min(\gam)$. 

For the converse, let $\gam$ be a hyperbolic isometry of $X$ with 
$|\gam| =: t$, let $x \in \Min(\gam)$, and suppose there is a 
geodesic $\tau \colon [0,t] \to X$ from $x$ to $\gam(x)$. Then the curve
$\xi \colon \R \to X$ satisfying
\begin{equation} \label{eq:tau}
\xi(nt + s) = \gam^n(\tau(s)) \quad
\text{for all $n \in \Z$ and $s \in [0,t]$}
\end{equation}
is a local geodesic (in fact it preserves all distances less than or equal 
to $t$), because $\xi$ is parametrized by arc length and 
$d(\xi(nt + s),\xi(nt + t + s)) = d_\gam(\tau(s)) \ge t$.
This curve $\xi$ also satisfies~\eqref{eq:axis}, hence it is an axis of
$\gam$ if it happens to be a line. This is the case, for example, 
if $X$ is a Busemann space, as then every local geodesic in $X$ is a geodesic.
(If $\eta \colon [a,b] \to X$ is the geodesic from $\xi(a)$
to $\xi(b)$, then the nonnegative function $s \mapsto d(\xi(s),\eta(s))$
is locally convex, hence convex on $[a,b]$, hence identically zero as
it vanishes at the endpoints.) Thus every hyperbolic isometry of a Busemann
space is axial (compare Chap.~11 in~\cite{Pap}).

The following result shows in particular that this last fact remains true 
in the more general context of this paper. Recall that a bicombing $\sig$ 
of $X$ is {\em $\gam$-equivariant}, for an isometry $\gam$ of $X$, if 
$\gam \circ \sig_{xy} = \sig_{\gam(x)\gam(y)}$ for all $(x,y) \in X^2$.

\begin{Prop} \label{Prop:gam-axis}
Let $\gam \colon X \to X$ be an isometry of a complete metric space $X$ with 
a $\gam$-equivariant bicombing $\sig$. Then:
\begin{enumerate}
\item[\rm (1)]
For all $x,y \in X$ and $n \ge 1$, 
$|\gam| \le \frac1n\, d(x,\gam^nx) \le d_\gam(y) + \frac2n\, d(x,y)$.
\item[\rm (2)]
For all $x \in X$, $\lim_{n\to\infty}\frac1n\, d(x,\gam^nx) = |\gam|$.
\item[\rm (3)]
If $\gam$ is hyperbolic, then for every $x \in \Min(\gam)$ there 
exists an axis of $\gam$ through $x$.
\item[\rm (4)]
If\/ $C \sub X$ is non-empty, $\sig$-convex, and $\gam$-invariant, 
then $|\gam| = \bigl| \gam|_C \bigr|$.
\end{enumerate}
\end{Prop}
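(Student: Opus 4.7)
The plan is to treat the four parts in order, since (2)--(4) all rest on (1). For the upper bound in (1), I would insert $y$ and $\gam^n y$ between $x$ and $\gam^n x$, use that $\gam^n$ is an isometry, and iterate the triangle inequality on $d(y, \gam^n y)$ to get $d(x, \gam^n x) \le 2 d(x, y) + n\, d_\gam(y)$. The lower bound $n |\gam| \le d(x, \gam^n x)$ is the only step requiring real ingenuity: I would apply Theorem~\ref{Thm:barycenter} by setting $m := \bary_n(x, \gam x, \ldots, \gam^{n-1} x)$, so that the equivariance property~(3) gives $\gam m = \bary_n(\gam x, \gam^2 x, \ldots, \gam^n x)$; then the matching property~(2), applied with the cyclic permutation that pairs $\gam^i x$ with itself for $i = 1, \ldots, n-1$ and pairs $x$ with $\gam^n x$, collapses all but one summand to zero and yields $d(m, \gam m) \le \tfrac{1}{n} d(x, \gam^n x)$; hence $|\gam| \le d_\gam(m) \le \tfrac{1}{n} d(x, \gam^n x)$. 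Part (2) then follows: given $\eps > 0$, pick $y$ with $d_\gam(y) < |\gam| + \eps$ and let $n \to \infty$ in the sandwich from (1).

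For (3), fix $x \in \Min(\gam)$, set $t := |\gam|$, and take $\tau \colon [0, t] \to X$ to be the unit-speed reparametrization of $\sig_{x, \gam x}$. Defining $\xi \colon \R \to X$ by $\xi(nt + s) := \gam^n \tau(s)$ for $n \in \Z$ and $s \in [0, t]$, the $\gam$-equivariance of $\sig$ ensures consistency at the glue points and yields the axis relation $\gam \xi(s) = \xi(s + t)$ together with local arc-length parametrization. To promote this to a global isometry I would use (1) together with $x \in \Min(\gam)$: for integers $k \le l$, part~(1) gives $(l-k) t \le d(\gam^k x, \gam^l x)$, while $d(\gam^k x, \gam^l x) = d(x, \gam^{l-k} x) \le (l-k)\, d_\gam(x) = (l-k) t$ forces equality. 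A triangle-inequality sandwich between $\xi(kt)$ and $\xi((l+1)t)$ containing $\xi(s), \xi(s')$, using local arc-length on the two end pieces, then promotes this to $d(\xi(s), \xi(s')) = s' - s$ for all real $s \le s'$. Hence $\xi$ is a line and so an axis of $\gam$ through $x$.

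Part (4) rides on the same barycenter trick. The inequality $|\gam| \le \bigl| \gam|_C \bigr|$ is immediate from $C \sub X$. For the reverse, fix $c \in C$ and put $m_n := \bary_n(c, \gam c, \ldots, \gam^{n-1} c)$. Since $C$ is $\sig$-convex and $\gam$-invariant, all inputs lie in $C$ and the iterative $\sig$-midpoint construction underlying $\bary_n$ stays inside $C$, so $m_n \in \ol C$. The same cyclic-permutation estimate as in (1) gives $d_\gam(m_n) \le \tfrac{1}{n} d(c, \gam^n c)$, which tends to $|\gam|$ by (2). Because $d_\gam$ is $2$-Lipschitz, $\inf_{\ol C} d_\gam = \inf_C d_\gam$, and hence $\bigl| \gam|_C \bigr| \le |\gam|$. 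The main obstacle throughout is the lower bound in (1); once the barycenter-plus-cyclic-shift trick is in hand, (2)--(4) follow by routine manipulations, with the only minor technical worry in (4) being whether $m_n \in C$ or merely in $\ol C$ — a distinction killed by continuity of $d_\gam$.
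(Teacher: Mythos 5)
Your proposal is correct and follows essentially the same route as the paper: triangle inequality for the upper bound in (1), the barycenter of the orbit segment with the cyclic matching for the lower bound, the sandwich for (2), the local-geodesic curve $\xi(nt+s)=\gam^n(\tau(s))$ promoted to a line via $d(x,\gam^n x)=nt$ for (3), and passage to the closure $\olC$ for (4). The only cosmetic difference is that in (4) the paper simply applies part (2) to the restricted isometry $\gam|_{\olC}$, whereas you re-run the barycenter estimate inside $\olC$ explicitly — the same mechanism.
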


\begin{proof}
The second inequality in~(1) is just~\eqref{eq:gam-n}. 
For the first, we employ the barycenter construction stated above.
Given $x \in X$ and $n \ge 1$, put $\bx := (x,\gam x,\dots,\gam^{n-1}x)$ 
and $\gam\bx := (\gam x,\dots,\gam^nx)$. Then
\begin{align*}
|\gam| \le d(\bary_n\bx,\gam\bary_n\bx) 
= d(\bary_n\bx,\bary_n\gam\bx) 
\le \frac{1}{n}\, d(x,\gam^nx),
\end{align*}
where the last two steps use parts~(3) and~(2) of Theorem~\ref{Thm:barycenter},
respectively.
The limit formula~(2) is an immediate consequence of~(1).
As for~(3), let $\gam$ be a hyperbolic isometry with $|\gam| =: t$, 
and let $x \in \Min(\gam)$. Then, for $y = x$, (1)~shows that 
$d(x,\gam^nx) = nt$ for all $n \ge 1$, thus any curve 
$\xi$ as in~\eqref{eq:tau} is a line. 
Finally, given any set $C$ as in~(4),
its closure $\olC$ is still $\sig$-convex and $\gam$-invariant, and 
furthermore complete. Now fix any $x \in C$ and apply~(2)
for both $\gam$ and $\gam|_{\olC}$ to conclude
that $|\gam| = \bigl| \gam|_{\olC} \bigr|$. 
Clearly $\bigl| \gam|_{\olC} \bigr| = \bigl| \gam|_C \bigr|$.
\end{proof}

The following standard result will be used several times in the sequel.

\begin{Lem} \label{Lem:semi-simple}
Let $X$ be a proper metric space, let\/ $\Gam$ be a group acting properly 
and cocompactly by isometries on $X$, and let 
$\alpha_1,\ldots,\alpha_n \in \Gam$. Given a sequence of points in $X$ 
along which the displacement functions $d_{\alpha_1},\ldots,d_{\alpha_n}$ are 
bounded, there exist a subsequence $x_k$ and isometries $\gam_k \in \Gam$
such that $\gam_kx_k$ converges to a point $z \in X$ and, for every
element $\alpha$ of the subgroup $\langle \alpha_1,\ldots,\alpha_n \rangle$,
$\gam_k\alpha\gam_k^{-1} \in \Gam$ is independent of $k$ 
and\/ $\lim_{k \to \infty} d_\alpha(x_k) = d_\alpha(y)$ for all points $y$
in the sequence $\gam_k^{-1}z$.
\end{Lem}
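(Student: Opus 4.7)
The plan is to use cocompactness to bring the $x_k$ into a compact set and extract a convergent subsequence, then use properness plus the displacement bounds to show that the conjugates $\gam_k \alpha_i \gam_k^{-1}$ can be made independent of $k$, and finally verify the displacement identity by direct computation. The main obstacle will be handling the potentially infinite subgroup $\langle \alpha_1,\dots,\alpha_n \rangle$ with only finitely many subsequence passages; the key is that it suffices to stabilize the $n$ generators, after which the homomorphism property takes care of arbitrary $\alpha$.

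First I would invoke cocompactness to pick a compact $K \sub X$ with $\Gam \cdot K = X$ and choose $\gam_k \in \Gam$ so that $\gam_k x_k \in K$; after extracting a subsequence, $\gam_k x_k \to z$ for some $z \in K$. Since $\alpha_1,\dots,\alpha_n$ have uniformly bounded displacement along $(x_k)$, the triangle inequality applied to any word in the $\alpha_i^{\pm 1}$ yields a finite bound $M_\alpha$ on $d_\alpha(x_k)$, uniform in $k$, for every $\alpha \in \langle \alpha_1,\dots,\alpha_n \rangle$. The crucial observation is the conjugation identity
\[
d_{\gam_k \alpha \gam_k^{-1}}(\gam_k x_k) = d(\gam_k x_k, \gam_k \alpha x_k) = d(x_k, \alpha x_k) = d_\alpha(x_k),
\]
so $\gam_k \alpha \gam_k^{-1}$ moves the point $\gam_k x_k$ by at most $M_\alpha$.

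Now fix a generator $\alpha_i$ and a closed ball $B$ centered at $z$ of radius exceeding $M_{\alpha_i}+1$; for $k$ sufficiently large both $\gam_k x_k$ and $\gam_k \alpha_i \gam_k^{-1}(\gam_k x_k)$ lie in $B$, which is compact since $X$ is proper. Properness of the action then forces $\{\gam_k \alpha_i \gam_k^{-1}\}_k$ to lie in the finite set $\{\gam \in \Gam : \gam B \cap B \ne \es\}$, so a subsequence stabilizes its value to some $\alpha_i' \in \Gam$. Performing this extraction successively for $i = 1,\dots,n$ yields a single subsequence on which $\alpha_i' := \gam_k \alpha_i \gam_k^{-1}$ is independent of $k$ for every $i$; since conjugation is a group homomorphism, $\alpha' := \gam_k \alpha \gam_k^{-1}$ is then independent of $k$ for every $\alpha \in \langle \alpha_1,\dots,\alpha_n \rangle$. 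To conclude, for $y_k := \gam_k^{-1} z$ and using $\alpha = \gam_k^{-1} \alpha' \gam_k$, a direct calculation gives $d_\alpha(y_k) = d(\gam_k^{-1} z, \gam_k^{-1} \alpha' z) = d_{\alpha'}(z)$, independent of $k$, while the conjugation identity gives $d_\alpha(x_k) = d_{\alpha'}(\gam_k x_k) \to d_{\alpha'}(z)$ by continuity of $d_{\alpha'}$ and the convergence $\gam_k x_k \to z$, as required.
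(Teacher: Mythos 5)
Your proof is correct and follows essentially the same route as the paper's: cocompactness to make $\gam_k x_k$ converge, properness to stabilize the conjugates of the $n$ generators along successive subsequences, the homomorphism property of conjugation to handle arbitrary $\alpha$ in the subgroup, and the conjugation identity $d_\alpha(x_k)=d_{\ol\alpha}(\gam_k x_k)$ for the final limit. The only difference is that you spell out the properness step (the ball $B$ and the finite set $\{\gam:\gam B\cap B\ne\es\}$) in more detail than the paper does.
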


In particular, for any $\alpha \in \Gam$, starting from a minimizing 
sequence for $d_\alpha$ one gets that 
$|\alpha| = \lim_{k \to \infty} d_\alpha(x_k) = d_\alpha(y)$ for some point~$y$.
This shows that $\Gam$ acts by semi-simple isometries 
(compare Proposition~II.6.10 in~\cite{BriH}). 

\begin{proof}
Since the action is cocompact we may assume, by passing to a subsequence $x_k$,
that there exist $\gam_k \in \Gam$ such that $\gam_kx_k$ converges to a point 
$z \in X$. By assumption the sequence 
$d(\gam_kx_k,\gam_k\alpha_1\gam_k^{-1}(\gam_kx_k)) = d_{\alpha_1}(x_k)$ is 
bounded, so $d(z,\gam_k\alpha_1\gam_k^{-1}z)$ is bounded as well.
Hence, because the action of $\Gam$ is proper, we may pass to a further 
subsequence in order to arrange that $\gam_k\alpha_1\gam_k^{-1}$
is equal to the same element $\ol\alpha_1 \in \Gam$ for all $k$. 
Repeating the argument for $\alpha_2,\ldots,\alpha_n$, we arrive at a map
$\alpha_i \mapsto \ol\alpha_i$ which extends to a homomorphism 
$\alpha \to \ol\alpha$ from $\langle \alpha_1,\ldots,\alpha_n \rangle$ 
into $\Gam$ such that $\gam_k\alpha\gam_k^{-1} = \ol\alpha$ for all $k$. 
Now it follows that 
\[
\lim_{k \to \infty} d_\alpha(x_k) 
= \lim_{k \to \infty} d_{\ol\alpha}(\gam_k x_k)
= d_{\ol\alpha}(z)
= d_\alpha(y)
\]
whenever $y = \gam_k^{-1}z$ for some $k$.
\end{proof}

From the above results we obtain a crucial fact for the proof 
of Theorem~\ref{Thm:intro-flat-torus}.

\begin{Prop} \label{Prop:min-a}
Let $X$ be a proper metric space with a bicombing $\sig$.
Let\/ $\Gam$ be a group acting properly and cocompactly by isometries on $X$,
and suppose that $\sig$ is\/ $\Gam$-equivariant. Then for every finitely 
generated abelian subgroup $A$ of\/ $\Gam$ the set
\[ 
\Min(A) := \bigcap_{\alpha\in A} \Min(\alpha) 
\]
is non-empty (and $\sig$-convex, $\alpha$-invariant for every $\alpha \in A$, 
and closed). 
\end{Prop}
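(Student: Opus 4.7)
The plan is to induct on the number $n$ of generators of a fixed generating set $\alpha_1,\ldots,\alpha_n$ of $A$. The case $n=0$ is trivial ($\Min(A) = X$); for $n = 1$, Lemma~\ref{Lem:semi-simple} shows that $\alpha_1$ is semi-simple, hence $\Min(\alpha_1) \ne \es$. For the inductive step set $A' := \langle \alpha_1,\ldots,\alpha_{n-1}\rangle$ and $C := \Min(A')$, non-empty by the inductive hypothesis. As an intersection of $\sig$-convex closed sets $C$ is itself $\sig$-convex and closed, and $C$ is $\alpha_n$-invariant since $\alpha_n$ commutes with every element of $A'$ and hence preserves each $\Min(\beta)$. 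Proposition~\ref{Prop:gam-axis}(4) then gives $\bigl|\alpha_n|_C\bigr| = |\alpha_n|$, so I can pick a sequence $x_k \in C$ with $d_{\alpha_n}(x_k) \to |\alpha_n|$; meanwhile $d_{\alpha_i}(x_k) = |\alpha_i|$ for $i < n$ because $x_k \in C \subseteq \Min(\alpha_i)$.

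Next I would apply Lemma~\ref{Lem:semi-simple} to $(x_k)$ with the finite list $\alpha_1,\ldots,\alpha_n$, whose displacements along the sequence are bounded. After extraction one obtains $\gam_k \in \Gam$, a limit $z = \lim_k \gam_k x_k$, and for each $\alpha \in A$ a conjugate $\bar\alpha := \gam_k\alpha\gam_k^{-1}$ independent of $k$, together with $\lim_k d_\alpha(x_k) = d_\alpha(y)$ whenever $y = \gam_k^{-1}z$. Fixing such a $y$ immediately yields $d_\alpha(y) = |\alpha|$ for every $\alpha \in A' \cup \{\alpha_n\}$, so $y \in \bigcap_{i=1}^n \Min(\alpha_i)$.

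The main obstacle is to promote this to $d_\alpha(y) = |\alpha|$ for every $\alpha \in A$. Writing $\alpha = \alpha'\alpha_n^j$ with $\alpha' \in A'$ and $j \in \Z$: if $\alpha_n$ is elliptic then $y$ is fixed by $\alpha_n$, so $\alpha y = \alpha' y$, and the bounds $|\alpha| \le |\alpha'|$ (since $|\alpha_n^j| = 0$) together with $|\alpha'| = |\alpha\alpha_n^{-j}| \le |\alpha|$ give $d(y,\alpha y) = |\alpha'| = |\alpha|$. If $\alpha_n$ is hyperbolic I would use Proposition~\ref{Prop:gam-axis}(3) to pick a $\sig$-axis $\xi$ of $\alpha_n$ through $y$; by commutativity, $\alpha'\xi$ is a parallel $\sig$-axis of $\alpha_n$. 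In the easy sub-case $\alpha'\xi = \xi$, the isometry $\alpha'|_\xi$ commutes with the translation $\alpha_n|_\xi$ and is therefore itself a translation, so $\alpha$ also translates $\xi$, and Proposition~\ref{Prop:gam-axis}(4) applied to $\alpha$ on $\xi$ gives $d(y,\alpha y) = |\alpha|$. The genuinely delicate sub-case is $\alpha'\xi \ne \xi$: the Flat Strip Theorem~\ref{Thm:flat-strips} produces a flat strip between $\xi$ and $\alpha'\xi$, iterating $\alpha'$ yields a family of parallel $\sig$-axes $(\alpha')^k\xi$ with consecutive pairs bounding flat strips, and the hard step will be to glue these strips into a single flat half-plane or plane (with Proposition~\ref{Prop:flat-half-plane} the natural tool, once the requisite asymptotic alignment is checked) on which $\alpha_n$ and $\alpha'$ act as commuting transverse translations; granted such a flat, $\alpha$ is itself a translation of it and $d(y,\alpha y) = |\alpha|$ follows from Proposition~\ref{Prop:gam-axis}(4) applied to the flat. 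The $\sig$-convexity, closedness, and $\alpha$-invariance of $\Min(A)$ pass to the intersection from those of the individual $\Min(\beta)$'s, using convexity of $d_\beta$ along $\sig$-geodesics and the commutativity within $A$.
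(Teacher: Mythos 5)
Your reduction to the generators does not close. After applying Lemma~\ref{Lem:semi-simple} you obtain a point $y$ with $d_\alpha(y)=|\alpha|$ for all $\alpha\in A'\cup\{\alpha_n\}$, but for a general element $\alpha=\alpha'\alpha_n^{\,j}$ with $\alpha'\ne e$ and $j\ne 0$ nothing so far controls $d_\alpha(y)$, and minimizing the displacements of $\alpha'$ and $\alpha_n$ separately does not minimize that of their product. You correctly identify this as the main obstacle, but your proposed repair is not a proof: in the sub-case $\alpha'\xi\ne\xi$ you explicitly leave open the gluing of the flat strips between the lines $(\alpha')^k\xi$ into a single flat on which $\alpha$ acts by translation (``granted such a flat\dots''). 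That gluing is essentially the content of the Flat Torus Theorem itself, for which this proposition is a preparatory lemma, so the argument is both incomplete and close to circular. There are further mismatches with the hypotheses: the proposition assumes only a (not necessarily consistent) bicombing, whereas $\sig$-axes (Proposition~\ref{Prop:sigma-axes}, not Proposition~\ref{Prop:gam-axis}(3), which yields only an axis; cf.\ Example~\ref{Expl:no-sigma-axis}), Theorem~\ref{Thm:flat-strips} and Proposition~\ref{Prop:flat-half-plane} all require consistency; and even if the flat existed, applying Proposition~\ref{Prop:gam-axis}(4) to it would require the flat to be $\sig$-convex, which the paper's final example shows can fail.

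The paper sidesteps all of this by changing the induction variable: instead of inducting on the number of generators, it proves $\Min(B)\ne\es$ for \emph{every finite subset} $B\sub A$, adding one arbitrary element $\alpha\in A$ (which may itself be a word like $\alpha'\alpha_n^{\,j}$) at a time. Since $A$ is abelian, $\Min(B)$ is $\sig$-convex and $\alpha$-invariant, so Proposition~\ref{Prop:gam-axis}(4) gives a sequence in $\Min(B)$ along which $d_\alpha$ tends to $|\alpha|$ while the $d_\beta$, $\beta\in B$, stay at $|\beta|$, and Lemma~\ref{Lem:semi-simple} produces a point of $\Min(B\cup\{\alpha\})$. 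An exhaustion of $A$ by finite subsets then yields a sequence $x_k$ with $d_\alpha(x_k)$ eventually equal to $|\alpha|$ for every $\alpha\in A$, and one final application of Lemma~\ref{Lem:semi-simple} to the generators --- whose conclusion holds for \emph{all} elements of the generated subgroup --- gives a point of $\Min(A)$. I suggest you restructure your induction along these lines; the first half of your argument (up to $y\in\Min(A')\cap\Min(\alpha_n)$) is sound but is simply not the right intermediate statement.
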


\begin{proof}
For an individual $\alpha \in A$ the set $\Min(\alpha)$ is non-empty, 
as noted after Lemma~\ref{Lem:semi-simple}. Now suppose that $B \sub A$ is a 
finite set such that $\Min(B) := \bigcap_{\beta \in B} \Min(\beta) \ne \es$, and 
let $\alpha \in A \sm B$. Note that $\Min(B)$ is $\sig$-convex and furthermore
$\alpha$-invariant, as $\alpha$ commutes with every element of $B$.
Using Proposition~\ref{Prop:gam-axis}(4) we find a sequence $x_k$ such that 
$d_\alpha(x_k) \to \bigl|\alpha|_{\Min(B)}\bigr| = |\alpha|$ and 
$d_\beta(x_k) = |\beta|$ for all $\beta \in B$.
Applying Lemma~\ref{Lem:semi-simple} for the set 
$B \cup \{\alpha\} \sub \Gam$ we get a point $y \in \Min(B \cup \{\alpha\})$.
This shows that $\Min(B) \ne \es$ for every finite set $B \sub A$.
Exhausting $A$ by an increasing sequence of finite subsets we obtain a sequence
$x_k$ in $X$ such that for every $\alpha \in A$, the sequence $d_\alpha(x_k)$ 
is eventually constant with value $|\alpha|$.
Applying Lemma~\ref{Lem:semi-simple} again, for generators 
$\alpha_1,\ldots,\alpha_n$ of $A$, we conclude that $\Min(A)$ is non-empty.
\end{proof}


\section{$\sig$-Axes} \label{Sect:sig-axes}

In Proposition~\ref{Prop:gam-axis} we showed that every hyperbolic 
isometry~$\gam$ of a complete metric space $X$ with a $\gam$-equivariant 
bicombing~$\sig$ is axial. It is natural to ask whether $\gam$ also admits
an axis that is at the same time a $\sig$-line. Such an axis will be called 
a {\em $\sig$-axis}. It turns out that the answer to this question is negative
in general, see Example~\ref{Expl:no-sigma-axis}. However, we shall prove
in Proposition~\ref{Prop:sigma-axes} that any group $\Gam$ satisfying the 
assumptions of Theorem~\ref{Thm:intro-flat-torus} acts by 
``$\sig$-semi-simple'' isometries, that is, every element has either a 
fixed point or a $\sig$-axis.

We start with an auxiliary result which will be useful in the proof of 
Proposition~\ref{Prop:phi-disp}. In~\cite{GoeK}, Goebel and Koter proved 
a fixed point theorem for ``rotative'' nonexpansive mappings in closed convex 
subsets of Banach spaces. The argument can easily be adapted to the present 
context.

\begin{Thm} \label{Thm:GoeK} 
Let\/ $Y$ be a complete metric space with a bicombing $\sig$.
Then every $1$-Lipschitz map $\phi\colon Y\to Y$ for which there 
exist an $n\geq 2$ and $0 \le a < n$ such that
\[
d(y,\phi^n(y)) \leq a\, d(y,\phi(y)) \quad \text{for all $y \in Y$}
\]
has a fixed point. Furthermore, the fixed point set of $\phi$ is a 
$1$-Lipschitz retract of\/ $Y$.
\end{Thm}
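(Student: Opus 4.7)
My plan is to emulate Goebel and Koter's Banach-space proof, transposing convex averages to the bicombing setting via the barycenter of Theorem~\ref{Thm:barycenter}. The core object is the ``Cesaro-type'' map
\[
T(y) := \bary_n\bigl(y, \phi(y), \phi^2(y), \ldots, \phi^{n-1}(y)\bigr),
\]
which is $1$-Lipschitz by Theorem~\ref{Thm:barycenter}(2). Since $\bary_n$ is symmetric in its arguments (apply~(2) with $y_i = x_{\pi(i)}$), I compare the tuples $(y, \phi y, \ldots, \phi^{n-1} y)$ and $(\phi y, \phi^2 y, \ldots, \phi^n y)$ via the transposition sending $y \mapsto \phi^n y$ and fixing the rest; part~(2) of Theorem~\ref{Thm:barycenter} then yields the key estimate
\[
d\bigl(T(y), T(\phi(y))\bigr) \le \tfrac{1}{n}\, d(y, \phi^n(y)) \le \tfrac{a}{n}\, r(y), \qquad r(y) := d(y, \phi(y)).
\]
In a Banach space $\phi$ commutes with the Cesaro average by linearity, which would upgrade this to $r(T(y)) \le (a/n)\, r(y)$, and iterating $T$ would immediately yield a Cauchy sequence converging to a fixed point.

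Here $\phi$ need not be $\sig$-equivariant, so this commutation fails and I have to compensate. For a basepoint $y_0 \in Y$ and $\lam \in (0,1)$, I introduce the Halpern-type map
\[
T_\lam(y) := \sig_{y_0,\phi(y)}(1-\lam),
\]
which by property~(iii) is $(1-\lam)$-Lipschitz on the complete space $Y$ and therefore, by the Banach contraction principle, has a unique fixed point $y_\lam$ satisfying
\[
d(y_0, y_\lam) = (1-\lam)\, d(y_0, \phi(y_\lam)), \qquad r(y_\lam) = \lam\, d(y_0, \phi(y_\lam)) \le r(y_0).
\]
The rotativity $d(y_\lam, \phi^n(y_\lam)) \le a\, r(y_\lam)$ combined with $a < n$ would then be used to show $r(y_\lam) \to 0$ as $\lam \to 0$ and to keep the family $\{y_\lam\}$ confined. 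Combining a null sequence $\lam_k \to 0$ with the barycentric averaging $\bary_k$ applied to $(y_{\lam_1}, \ldots, y_{\lam_k})$, and exploiting the $1$-Lipschitz decay estimate displayed above together with the $1$-Lipschitzness of each $\bary_k$, one extracts a Cauchy sequence whose limit $y^* \in Y$ satisfies $r(y^*) = 0$, i.e., is a genuine fixed point of $\phi$.

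For the retraction, let $F := \{y \in Y : \phi(y) = y\}$. On $F$ both $T$ and each $T_\lam$ restrict to the identity (by part~(1) of Theorem~\ref{Thm:barycenter} together with property~(i) of $\sig$). Running the construction of the previous paragraph coherently for every starting point $y \in Y$ yields a map $\rho \colon Y \to F$; its $1$-Lipschitz property is inherited from the $1$-Lipschitz properties of $T_\lam$ and $\bary_k$, and $\rho|_F = \mathrm{id}_F$ follows from the identity property on $F$. The principal obstacle throughout is the absence of $\sig$-equivariance for $\phi$: the strict inequality $a < n$ must be exploited tightly, through the Halpern path $\lam \to 0$, both to prevent escape of the approximate fixed points to infinity and, via the barycentric averaging, to upgrade them to a bona fide fixed point without any compactness hypothesis on $Y$.
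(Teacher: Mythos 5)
Your proposal does not close the argument; the two steps that actually carry the theorem are asserted rather than proved, and the second of them fails as described. First, the claim that $r(y_\lam)\to 0$ as $\lam\to 0$ requires $d(y_0,\phi(y_\lam))$ to stay bounded as $\lam\to 0$; the only bound you derive, $d(y_0,\phi(y_\lam))\le r(y_0)/\lam$, blows up, and the confinement of $\{y_\lam\}$ is exactly where the rotativity hypothesis $a<n$ must be injected by a quantitative computation (for a fixed-point-free nonexpansive map the anchored family $y_\lam$ does escape to infinity). You never perform this computation. Second, and more seriously, even granting a sequence $y_{\lam_k}$ with $r(y_{\lam_k})\to 0$, a complete metric space with a bicombing offers no mechanism to upgrade approximate fixed points to a fixed point: such a sequence need not subconverge (no compactness), and the proposed barycentric averaging of $(y_{\lam_1},\dots,y_{\lam_k})$ comes with no estimate whatsoever showing these averages are Cauchy --- the inequality $d(T(y),T(\phi(y)))\le \frac{a}{n}r(y)$ concerns the map $T$, not the points $y_{\lam_k}$, and $T$ plays no further role after you (correctly) observe that it does not commute with $\phi$. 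The same vagueness undermines the retraction claim.

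The paper's proof uses your implicit map but deploys it differently: it \emph{re-anchors}. For a fixed $\lam$ it defines $f_\lam(x)$ as the unique fixed point of $y\mapsto[x,\phi(y)](\lam)$, checks that $f_\lam$ is $1$-Lipschitz with the same fixed points as $\phi$, and then --- this is the heart, and the step your sketch is missing --- uses the rotativity inequality together with iterated convexity estimates along $\sig$ to prove
\[
d(f_\lam(x),f_\lam^2(x))\;\le\;\frac{a+(1-\lam)c_\lam(n)}{1+\lam+\dots+\lam^{n-1}}\; d(x,f_\lam(x)),
\]
where the factor tends to $a/n<1$ as $\lam\to 1$. Fixing such a $\lam$, the orbit $k\mapsto f_\lam^k(x)$ is Cauchy for every $x$, its limit $\rho(x)$ is a fixed point of $f_\lam$ and hence of $\phi$, and $\rho$ is automatically a $1$-Lipschitz retraction onto the fixed point set since each $f_\lam^k$ is $1$-Lipschitz. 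To repair your write-up you would essentially have to reproduce this contraction estimate; the anchored Halpern path $\lam\to 0$ by itself does not substitute for it.
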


\begin{proof} 
Let $\lam\in(0,1)$ (an appropriate value depending only on 
$a,n$ will be determined at the end of the proof). 
For every $x \in Y$, the map sending $y \in Y$ to $[x,\phi(y)](\lam)$ 
is $\lam$-Lipschitz and thus has a unique fixed point $f_\lam(x) \in Y$ 
by Banach's contraction mapping theorem. 
This yields a map $f_\lam \colon Y\to Y$ with the 
property that $f_\lam(x) = [x,\phi(f_\lam(x))](\lam)$ for all $x \in Y$.
By convexity,
\begin{align*}
d(f_\lam(x),f_\lam(y)) &\le (1-\lam)\, d(x,y) 
+ \lam\, d(\phi(f_\lam(x)),\phi(f_\lam(y))) \\
&\le (1-\lam)\, d(x,y) + \lam \,d(f_\lam(x),f_\lam(y)),
\end{align*}
so $f_\lam$ is $1$-Lipschitz. Furthermore, $f_\lam$ has the same fixed points 
as $\phi$, because $[x,\phi(x)](\lam) = x$ if and only if 
$\phi(x) = x$.
By the assumption on $\phi$,
\begin{align} \label{eq:f-lam-1}
d(y,\phi(f_\lam(y))) 
&\le d(y,\phi^n(y)) 
+ d(\phi^n(y),\phi(f_\lam(y))) \nonumber \\
&\le a\, d(y,\phi(y)) + d(\phi^{n-1}(y),f_\lam(y)).
\end{align}
To estimate the last term, note that, again by convexity,
\begin{align*}
d(\phi^m(y),f_\lam(y)) 
&\le (1-\lam)\, d(\phi^m(y),y) + \lam\, d(\phi^m(y),\phi(f_\lam(y))) \\
&\le (1-\lam)m\, d(\phi(y),y) + \lam\, d(\phi^{m-1}(y),f_\lam(y)), 
\end{align*}
for $m \in \{1,\dots,n-1\}$. It follows that 
\begin{equation} \label{eq:f-lam-2}
d(\phi^{n-1}(y),f_\lam(y)) 
\le (1-\lam)c_\lam(n)\, d(\phi(y),y) + \lam^{n-1} d(y,f_\lam(y)), 
\end{equation}
where $c_\lam(n) := (n-1) + (n-2)\lam + \dots + \lam^{n-2}$. Combining
the fact that $d(y,f_\lam(y)) = \lam\, d(y,\phi(f_\lam(y)))$ 
with~\eqref{eq:f-lam-1} and~\eqref{eq:f-lam-2} we get
\[
d(y,f_\lam(y)) 
\le \frac{a + (1-\lam)c_\lam(n)}{1-\lam^n}\, \lam\, d(y,\phi(y)).
\]
Now let $x \in Y$, and put $y := f_\lam(x)$. Then $y = [x,\phi(y)](\lam)$
and hence $\lam\, d(y,\phi(y)) = (1-\lam)\, d(x,y)$,
thus we obtain
\[
d(f_\lam(x),f_\lam^2(x)) 
\le \frac{a + (1-\lam)c_\lam(n)}{1+\lam+\dots+\lam^{n-1}}\, d(x,f_\lam(x)).
\]
Since the factor on the right converges to $\frac{a}{n} < 1$ for $\lam\to 1$, 
there is a $\lam\in(0,1)$ making it strictly less than $1$.
Then, for every $x \in Y$, the sequence $k \mapsto f_\lam^k(x)$ is Cauchy.
Since $f_\lam$ is $1$-Lipschitz, it follows that the limit point $\rho(x)$ 
of this sequence is a fixed point of $f_\lam$, hence a fixed point of $\phi$,
and $\rho$ is a $1$-Lipschitz retraction of $Y$ onto 
the fixed point set of $\phi$.
\end{proof}

Now let $\gam$ be any isometry of a metric space $X$ with a 
bicombing $\sig$. We associate with $\gam$ the map 
\[
\phi_\gam \colon X \to X, \quad 
\phi_\gam(x) = [\gam x,\gam^{-1}x]\bigl(\tfrac12\bigr).
\]
Note that $d(\phi_\gam(x),\phi_\gam(y)) \le \frac12\,d(\gam x,\gam y) + 
\frac12\,d(\gam^{-1}x,\gam^{-1}y) = d(x,y)$, thus $\phi_\gam$ is $1$-Lipschitz.
Our interest in this map comes from the following simple fact.

\begin{Lem} \label{Lem:phi-fixed-pt}
Let $\gam$ be an isometry of a metric space $X$ with a 
$\gam$-equivariant consistent bicombing $\sig$, and let $x \in X$ be such that 
$\gam(x) \ne x$. Then there exists a $\sig$-axis of $\gam$ through $x$ if 
and only if $x$ is a fixed point of the associated map~$\phi = \phi_\gam$. 
\end{Lem}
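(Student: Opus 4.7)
\emph{Forward direction.} Suppose $\xi \colon \R \to X$ is a $\sig$-axis with $\xi(0) = x$ and $\gam\xi(s) = \xi(s + t)$ for some $t > 0$. Then $\gam x = \xi(t)$, $\gam^{-1}x = \xi(-t)$, and the $\sig$-line property of $\xi$ yields $[\gam x, \gam^{-1}x](1/2) = \xi(0) = x$, i.e., $\phi_\gam(x) = x$.

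\emph{Converse.} Suppose $\phi_\gam(x) = x$ and set $t := d(x, \gam x) > 0$. The hypothesis identifies $x$ as the midpoint of $\sig_{\gam^{-1}x, \gam x}$, forcing $d(\gam^{-1}x, \gam x) = 2t$; by consistency, $\sig_{\gam^{-1}x, x}$ and $\sig_{x, \gam x}$ are the two halves of $\sig_{\gam^{-1}x, \gam x}$. Because $\sig$ is $\gam$-equivariant, $\phi_\gam$ commutes with $\gam$, so every iterate $\gam^k x$ is a fixed point of $\phi_\gam$ and hence the midpoint of the length-$2t$ $\sig$-segment $\sig_{\gam^{k-1}x, \gam^{k+1}x}$. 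I define the candidate axis $\xi \colon \R \to X$ by $\xi(kt + s) := \gam^k(\sig_{x, \gam x}(s/t))$ for $k \in \Z$ and $s \in [0, t]$, well-defined by $\gam$-equivariance; the midpoint observation combined with consistency identifies $\xi|_{[(k-1)t, (k+1)t]}$ with the unit-speed parametrization of $\sig_{\gam^{k-1}x, \gam^{k+1}x}$, so $\xi$ is a $\sig$-segment on every interval of length at most $2t$.

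The key task is an induction on $n \ge 1$ showing that $\xi|_{[-nt, nt]}$ coincides with the unit-speed parametrization of a single $\sig$-segment $\sig_{\gam^{-n}x, \gam^n x}$ of length $2nt$. For the inductive step, $\gam$-equivariance applied to the inductive hypothesis makes both $\xi|_{[-(n-1)t, (n+1)t]}$ and $\xi|_{[-(n+1)t, (n-1)t]}$ into $\sig$-segments of length $2nt$ sharing the interior $\sig$-segment $\xi|_{[-(n-1)t, (n-1)t]}$; consistency splices them into a local geodesic agreeing with $\xi|_{[-(n+1)t, (n+1)t]}$. Upgrading this splice to a genuine $\sig$-segment $\sig_{\gam^{-(n+1)}x, \gam^{n+1}x}$ reduces to the distance identity $d(\gam^{-(n+1)}x, \gam^{n+1}x) = 2(n+1)t$. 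Once the induction is complete, consistency applied to $\sig_{\gam^{-n}x, \gam^n x}$ for $n$ with $[r, s] \sub [-nt, nt]$ identifies $[\xi(r), \xi(s)]$ with $\xi|_{[r, s]}$ for all $r < s$, so $\xi$ is a $\sig$-line and hence the desired $\sig$-axis through $x$.

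The main obstacle is precisely the distance identity $d(\gam^{-n}x, \gam^n x) = 2nt$. The upper bound is immediate from the triangle inequality, but the matching lower bound amounts to ruling out any global shortcut closing up the piecewise construction. I would handle it by combining two ingredients: first, consistency forces the displacement function $d_\gam$ to be convex along every $\sig$-segment (as the distance between a $\sig$-segment and its $\gam$-translate), and since $d_\gam$ equals $t$ at each of the three iterates $\gam^{k-1}x, \gam^k x, \gam^{k+1}x$ on $\sig_{\gam^{k-1}x, \gam^{k+1}x}$, convexity pins $d_\gam \equiv t$ along the trace of $\xi$; second, the asymptotic displacement formula $|\gam| = \lim_m (1/m)\, d(x, \gam^m x)$ from Proposition~\ref{Prop:gam-axis}(2) together with the triangle bound $d(x,\gam^m x) \le mt$ gives $|\gam| \le t$, and a careful argument exploiting the rigidity of the local $\sig$-structure along $\xi$ forces the reverse inequality, placing $x$ in $\Min(\gam)$ and yielding the required distance identity.
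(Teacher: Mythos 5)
Your forward direction and the setup of the converse are correct and match the paper: $x=\sig_{\gam^{-1}x,\gam x}(\tfrac12)$, consistency splits that segment into its two halves $[\gam^{-1}x,x]$ and $[x,\gam x]$, equivariance propagates this along the orbit, and the resulting curve $\xi$ is a local $\sig$-geodesic in the sense that $\xi|_{[(k-1)t,(k+1)t]}$ parametrizes $\sig_{\gam^{k-1}x,\gam^{k+1}x}$. The problem is the local-to-global step, which you correctly identify as ``the main obstacle'' but do not actually carry out. Your plan is to first prove the distance identity $d(\gam^{-n}x,\gam^n x)=2nt$ by showing $x\in\Min(\gam)$, i.e.\ $|\gam|=t$. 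But $|\gam|=\lim_m\frac1m d(x,\gam^m x)$, so proving $|\gam|\ge t$ is equivalent to proving that $d(x,\gam^m x)$ grows linearly at rate $t$ --- which is, up to an $o(m)$ error, exactly the distance identity you are trying to establish. The two ingredients you do supply fall short of closing this loop: $d_\gam\equiv t$ along the trace of $\xi$ (which does follow from convexity of $d_\gam$ along $\sig$-segments) says nothing about displacements elsewhere in $X$, so it cannot by itself rule out $|\gam|<t$; and the ``careful argument exploiting the rigidity of the local $\sig$-structure'' that is supposed to force $|\gam|\ge t$ is precisely the missing content. Note also that even granted the distance identity, the concatenated curve would only be known to be \emph{a} geodesic between $\gam^{-n}x$ and $\gam^n x$, not the $\sig$-geodesic, so a further argument would still be needed.

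The gap is closed by the argument the paper alludes to with ``as in the case of Busemann spaces'', applied at the level of $\sig$-geodesics: fix $a<b$, let $\eta\colon[a,b]\to X$ be the constant-speed reparametrization of $\sig_{\xi(a)\xi(b)}$, and set $h(s):=d(\xi(s),\eta(s))$. On any subinterval of length at most $t$, $\xi$ is an affine reparametrization of a $\sig$-geodesic (by consistency, as a subsegment of some $\sig_{\gam^{k-1}x,\gam^{k+1}x}$), and since the family of all $\sig$-geodesics of a consistent bicombing is coherent, $h$ is convex on every such subinterval. A locally convex function on $[a,b]$ is convex, and $h(a)=h(b)=0$, so $h\equiv 0$. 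Hence $\xi|_{[a,b]}$ coincides with $\eta$; comparing the unit speed of $\xi$ with the constant speed of $\eta$ gives $d(\xi(a),\xi(b))=b-a$, and $\im(\xi|_{[a,b]})=[\xi(a),\xi(b)]$. This yields in one stroke that $\xi$ is a global geodesic, that its trace is $\sig$-convex, and (as a byproduct) your distance identity --- no separate appeal to $\Min(\gam)$ or to Proposition~\ref{Prop:gam-axis}(2) is needed. I recommend you replace your induction and the $|\gam|=t$ detour by this convexity argument.
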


\begin{proof}
If $\xi \colon X \to \R$ is a $\sig$-axis of $\gam$ through $x$,  
then clearly $\phi(x) = x$. Conversely, suppose that $x$ is a fixed point of 
$\phi$. Put $t := d_\gam(x)$. Let $\tau \colon [0,t] \to X$ be defined 
by $\tau(s) = [x,\gam x](\frac{s}{t})$, and consider the corresponding 
unit speed curve $\xi \colon \R \to X$ satisfying~\eqref{eq:tau}. 
Since $\phi(x) = x$ and $\sig$ is consistent, it follows that $\xi$ is a 
``local $\sig$-line'', in fact every subsegment of length $t$ is $\sig$-convex.
Then, as in the case of Busemann spaces, it follows that
$\xi$ is a (global) $\sig$-line and hence a $\sig$-axis of $x$.
\end{proof}

We now show that the translation length 
$|\phi_\gam| = \inf_{x\in X}d_{\phi_\gam}(x)$ of $\phi_\gam$ is always zero, 
provided the bicombing $\sig$ is $\gam$-equivariant.

\begin{Prop} \label{Prop:phi-disp}
Let $\gam$ be an isometry of a metric space $X$ with a 
$\gam$-equivariant bicombing $\sig$. Then for all $x \in X$ and 
$n \ge 1$,
\[
d(x,\phi_\gam{}^n(x)) \le \sqrt{n}\,d_\gam(x),
\]
and\/ $|\phi_\gam| = 0$.
\end{Prop}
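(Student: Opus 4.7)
The plan is to prove the main inequality by a discrete-heat-kernel argument using only convexity of $\sig$, and then to deduce $|\phi_\gam|=0$ by applying Theorem~\ref{Thm:GoeK} to $\phi:=\phi_\gam$ on a suitable $\phi$-invariant $\sig$-convex subset. As a preliminary, the $\gam$-equivariance of $\sig$ immediately makes $\phi$ commute with $\gam$:
\[
\gam\phi(x)=\gam[\gam x,\gam^{-1}x]\bigl(\tfrac12\bigr)=[\gam^2 x,x]\bigl(\tfrac12\bigr)=\phi(\gam x).
\]

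For the first assertion, fix $x\in X$ and set $D_n(j):=d(x,\gam^j\phi^n(x))$ for $j\in\Z$ and $n\ge 0$. Applying $\gam^j$ to the defining formula for $\phi$ and using $\gam$-equivariance,
\[
\gam^j\phi^n(x)=[\gam^{j+1}\phi^{n-1}(x),\gam^{j-1}\phi^{n-1}(x)]\bigl(\tfrac12\bigr),
\]
so (iii) with the constant geodesic $\sig_{xx}\equiv x$ on the other side yields the discrete-heat-equation recursion $D_n(j)\le\tfrac12(D_{n-1}(j+1)+D_{n-1}(j-1))$ for $n\ge 1$. Iterating this $n$ times gives the binomial-kernel bound
\[
D_n(0)\le 2^{-n}\sum_{k=0}^n\binom{n}{k}D_0(n-2k).
\]
Since $\gam$ is an isometry, $D_0(m)\le|m|\,d_\gam(x)$, so the right-hand side is at most $d_\gam(x)\cdot\E[|S_n|]$, where $S_n$ is the position at time $n$ of the simple symmetric random walk on $\Z$. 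Cauchy--Schwarz then gives $\E[|S_n|]\le(\E[S_n^2])^{1/2}=\sqrt{n}$, which is the claimed inequality $d(x,\phi^n(x))\le\sqrt n\,d_\gam(x)$.

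For $|\phi_\gam|=0$, I would (pass to the completion of $X$ if necessary, to which $\sig$, $\gam$, and $\phi$ extend by uniform continuity, and) consider the closed $\sig$-convex hull $C$ of the $\gam$-orbit $\{\gam^k x:k\in\Z\}$. Then $C$ is complete, $\sig$-convex, $\gam$-invariant, and $\phi$-invariant, since for $y\in C$ both $\gam^{\pm 1}y$ lie in $C$ and hence $\phi(y)=[\gam y,\gam^{-1}y](\tfrac12)\in C$. Moreover $d_\gam$ is convex along $\sig$-geodesics (apply (iii) to the endpoint pairs $(p,\gam p)$ and $(q,\gam q)$) and equals $d_\gam(x)$ on the orbit, so $d_\gam\le d_\gam(x)$ throughout $C$. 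Suppose, for contradiction, that $c:=\inf_{y\in C}d_\phi(y)>0$. The main inequality, applied on $C$, gives
\[
d(y,\phi^n(y))\le\sqrt n\,d_\gam(x)\le\bigl(\sqrt n\,d_\gam(x)/c\bigr)\,d_\phi(y),
\]
and for any $n\ge 2$ with $n>(d_\gam(x)/c)^2$ the constant $a:=\sqrt n\,d_\gam(x)/c$ is strictly less than $n$. Theorem~\ref{Thm:GoeK} then produces a fixed point of $\phi|_C$ in $C$, contradicting $c>0$. Hence $\inf_C d_\phi=0$, and by density of $X$ in its completion together with continuity of $d_\phi$, we obtain $|\phi_\gam|=0$. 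The main technical obstacle will be the routine bookkeeping for the completion step and checking that $\phi|_C$ genuinely falls under Theorem~\ref{Thm:GoeK}; both are straightforward once one uses that (iii) makes $\sig$ jointly $1$-Lipschitz.
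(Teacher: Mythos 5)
Your proposal is correct and follows essentially the same route as the paper: the same discrete heat-kernel recursion $D_n(j)\le\tfrac12(D_{n-1}(j+1)+D_{n-1}(j-1))$ obtained from property (iii) and the commutation of $\phi_\gam$ with $\gam$, the same $\E[|S_n|]\le\sqrt n$ moment estimate, and the same appeal to Theorem~\ref{Thm:GoeK} on a complete, $\sig$-convex, $\phi_\gam$-invariant set on which $d_\gam$ is bounded (the paper simply takes the sublevel set $\{y: d_\gam(y)\le c\}$ for $c>|\gam|$ instead of the closed $\sig$-convex hull of the orbit). One cosmetic remark: for a possibly non-consistent bicombing, (iii) yields only the endpoint inequality $d_\gam(\sig_{pq}(t))\le(1-t)\,d_\gam(p)+t\,d_\gam(q)$ rather than genuine convexity of $d_\gam$ along $\sig$-geodesics, but this weaker inequality is all that your sublevel-set/convex-hull argument actually uses.
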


\begin{proof} 
We assume without loss of generality that $X$ is complete.
We write $\phi := \phi_\gam$. Let $x \in X$.
For all $m \in \Z$ and $0 \le n \in \Z$, 
put $x_{n,m} := \phi^n(\gam^m x)$ and $d_{n,m} := d(x,x_{n,m})$.
Note that $\phi$ and $\gam$ commute because $\sig$ is $\gam$-equivariant.
In particular, for $n \ge 1$, we have  
$x_{n,m} = [x_{n-1,m-1},x_{n-1,m+1}](\tfrac12)$ and hence
\[
d_{n,m} \le \frac12 (d_{n-1,m-1} + d_{n-1,m+1}).
\]
By induction on $n$ this yields
\[
d_{n,m} \le 2^{-n} \sum_{i=0}^n \binom{n}{i} d_{0,m-n+2i} \ .
\]
Since $d_{0,m} \leq |m|\,d_\gam(x)$, we obtain
\[
d_{n,0} \le 2^{-n} \sum_{i=0}^n \binom{n}{i} d_{0,2i-n} 
\le 2\, d_\gam(x) \cdot 2^{-n} \sum_{i=0}^n \binom{n}{i} 
\left| i-\frac{n}{2} \right| .
\]
As $2^{-n}\binom{n}{i}$ is the probability mass function of a binomial 
distribution with parameters $n$ and $\tfrac12$ (number of trials and 
probability of success), let $Z$ be a random variable distributed accordingly. 
Recall that the mean and variance are $\mathrm{E}[Z]=\tfrac{n}{2}$, 
$\mathrm{Var}[Z]=\tfrac{n}{4}$, hence
\begin{align*}
\frac{d_{n,0}}{2\, d_\gam(x)} 
\le \mathrm{E}[|Z-\mathrm{E}[Z]|] 
&= \mathrm{E}\left[\sqrt{(Z-\mathrm{E}[Z])^2}\right]\\
&\le \sqrt{\mathrm{E}[(Z-\mathrm{E}[Z])^2]} 
= \sqrt{\mathrm{Var}[Z]} = \frac{\sqrt{n}}{2}
\end{align*}
by Jensen's inequality. Thus $d(x,\phi^n(x)) = d_{n,0} \leq \sqrt{n}\,d_\gam(x)$.

For any $c > |\gam|$, $Y := \{x \in X: d_\gam(x) \leq c\}$ is a non-empty,
complete and $\sig$-convex set with $\gam(Y) = Y$ and, consequently, 
$\phi(Y) \subset Y$. Now if $|\phi|$ was positive, then for some sufficiently 
large $n$ and for some $a < n$ we would have 
$d(x,\phi^n(x)) \leq c\sqrt{n} \le a |\phi| \le a\, d(x,\phi(x))$ 
for all $x \in Y$, and Theorem~\ref{Thm:GoeK} would provide a fixed point
$y = \phi(y)$, in contradiction to $|\phi| > 0$. This shows that
$|\phi| = 0$.
\end{proof}

The following example shows that, in general, the infimum $|\phi_\gam| = 0$ 
need not be attained. The isometry $\gam$ we construct is axial, but has no 
$\sig$-axis.

\begin{Expl} \label{Expl:no-sigma-axis}
Let $X := l_\infty(\Z)$ be the Banach space of bounded functions 
$x \colon \Z \to \R$, with the supremum norm, and consider the 
affine bicombing $(x,y,\lam) \mapsto (1-\lambda)x + \lambda y$
(there is in fact no other bicombing on $X$, see Theorem~1 in~\cite{GaeM}). 
Let $\rho \colon X \to X$ be the shift map satisfying $\rho(x)(k) = x(k-1)$ 
for all $x \in X$ and $k \in \Z$, and let $p \in X$ be defined by 
$p(k) = 1$ for $k \ge 1$ and $p(k) = 0$ otherwise. 
The isometry $\gam \colon X \to X$, $\gam(x) := \rho(x) + p$, 
satisfies $\|\gam^n(0)\|_\infty = n$ for all $n \ge 1$, 
so $|\gam| = d_\gam(0) = 1$ by Proposition~\ref{Prop:gam-axis}. 
Thus $\gam$ is hyperbolic and hence axial. 
The associated map $\phi = \phi_\gam \colon X \to X$ is given by
\[
\phi(x) = \frac12(\rho(x) + \rho^{-1}(x) - z),
\]
where $z := \rho^{-1}(p) - p$ is the indicator function of $0$. 
Now an $x \in X$ with $\phi(x) = x$ would have to fulfil 
$x(0) = \frac12(x(-1) + x(1) - 1)$ as well as 
$x(k) = \frac12(x(k-1)+x(k+1))$ for all $k \ne 0$, and it is easy to see that
no such bounded function $x \colon \Z \to \R$ exists.
\end{Expl}

In contrast to this example, the following holds.

\begin{Prop} \label{Prop:sigma-axes}
Let $X$ be a proper metric space with a consistent bicombing $\sig$.
Let\/ $\Gam$ be a group acting properly and cocompactly by isometries on $X$,
and suppose that $\sig$ is\/ $\Gam$-equivariant. 
Then every isometry $\alpha \in \Gam$ has either a fixed point or 
a $\sig$-axis.
\end{Prop}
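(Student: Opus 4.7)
The plan is to reduce the statement to finding a fixed point of the auxiliary map $\phi_\alpha$ and then to combine the vanishing of $|\phi_\alpha|$ established in Proposition~\ref{Prop:phi-disp} with the cocompactness of the action via Lemma~\ref{Lem:semi-simple}; Lemma~\ref{Lem:phi-fixed-pt} will then convert such a fixed point into a $\sig$-axis. If $\alpha$ happens to be elliptic there is nothing to show, and by the remark following Lemma~\ref{Lem:semi-simple} every element of $\Gam$ is semi-simple, so the remaining case is that $\alpha$ is hyperbolic with $|\alpha| > 0$.

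First I would work inside the closed subset $M := \Min(\alpha)$, which is $\sig$-convex (a direct consequence of bicombing property~(iii) together with $\alpha$-equivariance), $\alpha$-invariant, and moreover $\phi_\alpha$-invariant: if $x \in M$ then $\alpha x, \alpha^{-1}x \in M$, so $\phi_\alpha(x) = [\alpha x,\alpha^{-1}x](\tfrac12) \in [\alpha x, \alpha^{-1}x] \sub M$ by $\sig$-convexity. Since $M$ is closed in the proper space $X$ it is complete, and equipped with the restricted bicombing it meets the hypotheses of Proposition~\ref{Prop:phi-disp}. The latter then yields a sequence $x_k \in M$ with $d(x_k,\phi_\alpha(x_k)) \to 0$, and by the definition of $M$ the sequence $d_\alpha(x_k) = |\alpha|$ is constant, hence bounded.

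Next I would apply Lemma~\ref{Lem:semi-simple} to this sequence with $\alpha_1 = \alpha$, obtaining isometries $\gam_k \in \Gam$ and a limit point $z \in X$ such that $\gam_k x_k \to z$ and $\gam_k \alpha \gam_k^{-1} = \b\alpha$ is the same element of $\Gam$ for all~$k$. The $\Gam$-equivariance of $\sig$ immediately delivers the intertwining identity $\gam_k \circ \phi_\alpha = \phi_{\b\alpha} \circ \gam_k$, hence
\[
d(\gam_k x_k,\phi_{\b\alpha}(\gam_k x_k)) = d(x_k,\phi_\alpha(x_k)) \to 0,
\]
and passing to the limit, using that $\phi_{\b\alpha}$ is $1$-Lipschitz, yields $\phi_{\b\alpha}(z) = z$. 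Since $\b\alpha$ is conjugate to $\alpha$ it is still hyperbolic and so $\b\alpha(z) \ne z$; Lemma~\ref{Lem:phi-fixed-pt} therefore produces a $\sig$-axis $\b\xi$ of $\b\alpha$ through $z$. For any fixed $k$, the curve $\gam_k^{-1} \circ \b\xi$ is then a line satisfying the axis relation for $\alpha = \gam_k^{-1} \b\alpha \gam_k$, and its trace is $\sig$-convex because $\gam_k^{-1}$ preserves $\sig$-convex sets by the $\Gam$-equivariance of $\sig$; this yields the desired $\sig$-axis of $\alpha$.

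The principal obstacle I anticipate is arranging that the approximate fixed-point sequence for $\phi_\alpha$ has bounded $\alpha$-displacement, which is exactly what Lemma~\ref{Lem:semi-simple} requires. A priori the inequality goes only in the direction $d(x,\phi_\alpha(x)) \le d_\alpha(x)$, so it is not enough to take just any minimizing sequence in $X$; the remedy is to replace $X$ by $\Min(\alpha)$ and to verify that $\phi_\alpha$ continues to map this set into itself, after which Proposition~\ref{Prop:phi-disp} applies to the restricted setup and the cocompactness argument proceeds cleanly.
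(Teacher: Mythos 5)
Your argument is correct and follows essentially the same route as the paper's proof: reduce to finding a fixed point of $\phi_\alpha$ via Lemma~\ref{Lem:phi-fixed-pt}, produce an approximate fixed-point sequence with bounded $\alpha$-displacement from Proposition~\ref{Prop:phi-disp} applied to a complete, $\sig$-convex, $\alpha$-invariant subset, and then conjugate and pass to a limit using Lemma~\ref{Lem:semi-simple} and the equivariance identity $\gam_k\circ\phi_\alpha=\phi_{\b\alpha}\circ\gam_k$. The only (immaterial) difference is that you work inside $\Min(\alpha)$, whereas the paper uses the sublevel set $\{x: d_\alpha(x)\le r\}$ for $r>|\alpha|$, which avoids even having to invoke semi-simplicity; both sets do the same job.
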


\begin{proof} 
Let $\alpha \in \Gam$. In view of Lemma~\ref{Lem:phi-fixed-pt} we just 
need to show that the associated map $\phi := \phi_\alpha$ has a fixed point. 
Let $r > |\alpha|$. Applying Proposition~\ref{Prop:phi-disp} to the complete,
$\sig$-convex and $\alpha$-invariant set 
$X_r := \{ x \in X : d_\alpha(x) \le r \}$, 
we find a sequence of points in $X_r$ along which the displacement function
$d_\phi$ tends to zero. By Lemma~\ref{Lem:semi-simple} there exist a 
subsequence $x_k$ and isometries $\gam_k \in \Gam$ such that 
$\gam_kx_k$ converges to a point $z \in X$ and 
$\gam_k\alpha\gam_k^{-1} =: \ol\alpha \in \Gam$ is constant. 
Put $\ol\phi := \phi_{\ol\alpha}$. Since $\sig$ is $\gam_k$-equivariant, 
we have that for all $y \in X$,
\[
\gam_k \circ [\alpha^{-1}y,\alpha y] 
= [\ol\alpha^{-1}(\gam_ky),\ol\alpha(\gam_ky)], 
\]
hence $d_\phi(y) = d(\gam_ky,(\gam_k \circ \phi)y) = d_{\ol\phi}(\gam_ky)$.
It follows that 
\[
d_\phi(\gam_k^{-1}z) 
= d_{\ol\phi}(z) 
= \lim_{k \to \infty} d_{\ol\phi}(\gam_k x_k) 
= \lim_{k \to \infty} d_\phi(x_k) = 0, 
\]
thus every $\gam_k^{-1}z$ is a fixed point of $\phi$.
\end{proof}


\section{Flat tori} \label{Sect:flat-tori} 

We now prove Theorem~\ref{Thm:intro-flat-torus}.
Thus, in the following, $X$ denotes a proper metric space with  
a consistent bicombing $\sig$, equivariant with respect to a group $\Gam$ 
that acts properly and cocompactly by isometries on $X$, and $A$ is a free 
abelian subgroup of $\Gam$ of rank $n$. We retain the multiplicative notation 
for $A \sub \Gam$, but we fix once and for all an isomorphism 
$\iota \colon (\Z^n,+) \to (A,\cdot)$. For generic points $a,b \in \Z^n$,
the corresponding elements of $A$ will be denoted by $\alpha := \iota(a)$,
$\beta := \iota(b)$ without further comment. We write  
$b_1 = (1,0,\ldots,0,), \ldots, b_n = (0,\ldots,0,1)$ for the canonical
generators of $\Z^n$ and put $\beta_i := \iota(b_i)$.
With this convention, we can state the assertion of 
Theorem~\ref{Thm:intro-flat-torus} as follows: there exist a norm 
$\|\cdot\|$ on $\R^n$ and an isometric embedding 
$f \colon (\R^n,\|\cdot\|) \to X$ such that 
\begin{equation} \label{eq:f-f}
\alpha f(p) = f(p+a) \quad \text{for all $p \in \R^n$ and $a \in \Z^n$.}
\end{equation}
This implies that $d(f(p),\alpha^n f(p)) = \|na\|$ for all $n \ge 1$,
therefore $\|a\|$ must be equal to the translation length~$|\alpha|$
by Proposition~\ref{Prop:gam-axis}(2). We first show that a norm
with this latter property indeed exists. Notice that we already know from
Proposition~\ref{Prop:min-a} that $\Min(A)$ is non-empty.

\begin{Lem} \label{Lem:MinA}
There is a unique norm \mbox{$\|\cdot\|$} on~$\R^n$ 
such that\/ $\|a\| = |\alpha|$ for every $a \in \Z^n$. 
With respect to the metric on $\Z^n$ induced by this norm, 
the map $a \mapsto \alpha x$ is an isometric embedding of\/ $\Z^n$ into $X$ 
for every $x \in \Min(A)$.
\end{Lem}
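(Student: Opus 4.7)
The plan is to fix some $x \in \Min(A)$ (available from Proposition~\ref{Prop:min-a}), define $F \colon \Z^n \to [0,\infty)$ by $F(a) := |\alpha|$, verify that $F$ behaves like the restriction of a norm to the lattice $\Z^n$, extend $F$ by density to the desired norm on $\R^n$, and read off the isometry statement at the end. Observe first that $x \in \Min(A) \sub \Min(\alpha^{-1}\beta)$ for every $a,b \in \Z^n$, which gives $d(\alpha x,\beta x) = d(x,\alpha^{-1}\beta x) = |\alpha^{-1}\beta| = F(b-a)$; hence the isometry claim will follow automatically once $F$ is shown to extend to a norm with $F(a) = \|a\|$ for $a \in \Z^n$.

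Next I would establish four properties of $F$. Subadditivity $F(a+b) \le F(a) + F(b)$ is the triangle inequality $d(x,\alpha\beta x) \le d(x,\alpha x) + d(\alpha x,\alpha\beta x)$. Integer homogeneity $F(ka) = |k|F(a)$ follows because $x \in \Min(\alpha)$ combined with Proposition~\ref{Prop:gam-axis}(3) produces an axis of $\alpha$ through $x$ along which $\alpha^k$ translates by $k|\alpha|$, giving $d(x,\alpha^k x) = |k||\alpha|$; Proposition~\ref{Prop:gam-axis}(2) then identifies this with $|\alpha^k|$. Positivity $F(a) > 0$ for $a \ne 0$ uses that $A$ is free abelian of rank $n$, so every nonzero $\alpha$ has infinite order; were $|\alpha| = 0$, then $\alpha$ would fix a point (being semi-simple by Proposition~\ref{Prop:min-a}), while properness of the $\Gam$-action makes point stabilizers finite, a contradiction. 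Finally, properness of the $\Gam$-action directly yields that each sublevel set $\{a \in \Z^n : F(a) \le R\}$ is finite.

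The extension to $\Q^n$ is routine: set $F(a/N) := F(a)/N$ for $N \ge 1$, which is well-defined by integer homogeneity and gives a subadditive, $\Q$-homogeneous function on $\Q^n$. Writing $q - q' = \sum_i (q_i - q'_i) b_i$ and using subadditivity and homogeneity yields $|F(q) - F(q')| \le F(q-q') \le C \sum_i |q_i - q'_i|$ with $C := \max_i F(b_i)$, so $F$ is Lipschitz on $\Q^n$ and extends uniquely by density to a continuous function $G \colon \R^n \to [0,\infty)$ inheriting subadditivity and $\R$-homogeneity, i.e.\ a seminorm.

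The main obstacle is upgrading this seminorm to a norm, because a seminorm that is positive on $\Z^n \sm \{0\}$ can in principle vanish along an irrational line in $\R^n$. Here the finiteness of sublevel sets of $F$ is essential: if $v \ne 0$ satisfied $G(v) = 0$, choose $a_k \in \Z^n$ with each coordinate of $a_k - kv$ in $[0,1)$; then the sequence $a_k$ is unbounded in $\Z^n$ while $F(a_k) = G(a_k) \le G(kv) + Ln = Ln$ for the Lipschitz constant $L$ of $G$, contradicting finiteness of $\{a : F(a) \le Ln\}$. Uniqueness of the norm is immediate: any norm that agrees with $F$ on $\Z^n$ must agree with $G$ on $\Q^n$ by $\Q$-homogeneity, and hence on all of $\R^n$ by continuity.
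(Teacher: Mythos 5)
Your proof is correct and follows essentially the same route as the paper: define $\|a\|:=|\alpha|$, get the isometry statement from $x\in\Min(\alpha^{-1}\beta)$, check positivity via properness of the action and integer homogeneity via Proposition~\ref{Prop:gam-axis}, and extend through $\Q^n$ to $\R^n$. The one place you go beyond the paper's (rather terse) final sentence is in verifying that the continuous extension is positive on irrational directions, using finiteness of the sublevel sets $\{a\in\Z^n: \|a\|\le R\}$ coming from properness of the $\Gam$-action; this is a genuine point (a seminorm positive on $\Q^n\sm\{0\}$ can vanish on an irrational line, e.g.\ $(x,y)\mapsto|x-\sqrt2\,y|$) and your argument settles it correctly.
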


\begin{proof} 
Define $\|a\| := |\alpha|$ for all $a \in \Z^n$. Then,
for every $x \in \Min(A)$ and $a,b \in \Z^n$,
\[
\|b - a\| = |\alpha^{-1}\beta| = d(x,\alpha^{-1}\beta x) = d(\alpha x,\beta x),
\]
and this is non-zero if $\alpha \ne \beta$, 
for otherwise $\alpha^{-1}\beta$ would have a fixed point and infinite order 
as $A$ is free, in contradiction to the action being proper.
Furthermore, $\|m a\| = |m|\|a\|$ for $m \in \Z$ because
$|\alpha^m| = |m| |\alpha|$ by Proposition~\ref{Prop:gam-axis}.
It follows that \mbox{$\|\cdot\|$} extends uniquely to a norm on $\Q^n$ and 
then also to a norm on~$\R^n$.
\end{proof}

In the following, $\R^n$ (and $\Z^n,\Q^n$) are always equipped with the metric 
induced by this norm $\|\cdot\|$. The next result will constitute the 
last step of the proof of Theorem~\ref{Thm:intro-flat-torus}.

\begin{Prop} \label{Prop:final-step}
Assume that there exists a sequence of\/ $1$-Lipschitz maps 
$f_k \colon \R^n \to X$ such that for all $p \in \R^n$ and 
\mbox{$i \in \{1,\ldots,n\}$},
\[
\lim_{k \to \infty} d(\beta_if_k(p),f_k(p+b_i)) = 0.
\]
Then there is an isometric embedding $f\colon \R^n\to X$ 
satisfying~\eqref{eq:f-f}.
\end{Prop}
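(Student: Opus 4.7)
I would obtain $f$ as a uniform-on-compacts limit of the $f_k$, after translating by suitable isometries in $\Gam$ so that basepoints converge, and then upgrade the resulting $1$-Lipschitz limit to an isometric embedding. The principal obstacle is that $f_k(0)$ may escape to infinity, so one cannot apply Arzela--Ascoli directly; translating by $\gam_k \in \Gam$ to keep basepoints in a compact set conjugates the action of the generators $\beta_i$, and Lemma~\ref{Lem:semi-simple} is precisely the tool needed to stabilize these conjugates along a subsequence.

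Concretely, since $f_k$ is $1$-Lipschitz, $d_{\beta_i}(f_k(0)) \le d(\beta_i f_k(0), f_k(b_i)) + \|b_i\|$ is bounded in $k$. Lemma~\ref{Lem:semi-simple} then yields a subsequence and isometries $\gam_k \in \Gam$ such that $\gam_k f_k(0) \to z \in X$ and $\b\beta_i := \gam_k \beta_i \gam_k^{-1}$ is independent of $k$ for each $i$. Replacing $f_k$ by $\tilde f_k := \gam_k \circ f_k$ preserves the hypothesis in the form $d(\b\beta_i \tilde f_k(p), \tilde f_k(p + b_i)) \to 0$, while $\tilde f_k$ remains $1$-Lipschitz with $\tilde f_k(0) \to z$. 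Since $X$ is proper, a standard Arzela--Ascoli diagonal argument extracts a further subsequence converging uniformly on compact sets to a $1$-Lipschitz $\tilde f \colon \R^n \to X$, and passing to the limit gives $\b\beta_i \tilde f(p) = \tilde f(p + b_i)$ for all $p$ and $i$. Since the $\b\beta_i$ pairwise commute (being the images of the $\beta_i$ under the conjugation homomorphism $A \to \Gam$), iteration yields $\b\alpha \tilde f(p) = \tilde f(p + a)$ for every $a \in \Z^n$.

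The remaining task is to show $\tilde f$ is isometric. Since conjugation preserves translation length, $|\b\alpha| = |\alpha| = \|a\|$ by Lemma~\ref{Lem:MinA}, hence
\[
\|a\| \le d(\tilde f(p), \b\alpha \tilde f(p)) = d(\tilde f(p), \tilde f(p + a)) \le \|a\|
\]
is an equality for every $p \in \R^n$ and $a \in \Z^n$. Replacing $a$ by $Na$ and applying the triangle inequality to $\tilde f(p), \tilde f(p + ta), \tilde f(p + Na)$ forces $d(\tilde f(p), \tilde f(p + ta)) = t\|a\|$ for all $t \in [0, N]$; since $N$ is arbitrary, this holds for all $t \ge 0$. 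Every rational vector has the form $\tfrac{1}{N} a$, so $\tilde f$ is isometric on rational translates, and the general case follows by continuity of $\tilde f$ together with density of $\Q^n$ in $\R^n$. Finally, fixing any $k$ and setting $f := \gam_k^{-1} \circ \tilde f$ produces an isometric embedding satisfying $\alpha f(p) = \gam_k^{-1} \b\alpha \tilde f(p) = f(p + a)$, as required by~\eqref{eq:f-f}.
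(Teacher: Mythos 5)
Your proposal is correct and follows essentially the same route as the paper: bound the displacements $d_{\beta_i}(f_k(0))$, stabilize the conjugates via Lemma~\ref{Lem:semi-simple}, extract a $1$-Lipschitz limit by Arzel\`a--Ascoli, use $\|a\|=|\alpha|$ to force equality on integer translates, and conclude by density. The only cosmetic difference is in the last step, where you pass through rational translates via the triangle-inequality squeeze on $Na$ while the paper phrases the same squeeze as isometric embedding of segments joining integer points; both are equivalent.
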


\begin{proof} 
First note that the displacement of $\beta_i$ along the sequence $f_k(0)$ 
is bounded by $d(\beta_if_k(0),f_k(b_i)) + d(f_k(b_i),f_k(0))$, where the first 
term goes to zero by assumption and the second is bounded by $\|b_i\|$. 
By Lemma~\ref{Lem:semi-simple} we may assume, after passing to a subsequence,
that there are isometries $\gam_k \in \Gam$ such that $\gam_k f_k(0)$ converges
to a point in $X$ and $\gam_k\alpha\gam_k^{-1} =: \ol\alpha \in \Gam$ 
is constant for every $\alpha \in A$.
By the Arzel\`a--Ascoli theorem we may further assume that the sequence 
of $1$-Lipschitz maps $\gam_k\circ f_k$ converges uniformly on compact 
sets to a $1$-Lipschitz map $h \colon \R^n \to X$.
Now, for all $p \in \R^n$ and $a \in \Z^n$, we have that
\begin{align*}
d(\alpha\gam_k^{-1}h(p),\gam_k^{-1}h(p+a)) 
&= d(\ol\alpha h(p),h(p+a)) \\
&= \lim_{k\to\infty} d(\ol\alpha\gam_kf_k(p),\gam_kf_k(p+a)) \\
&= \lim_{k\to\infty} d(\alpha f_k(p),f_k(p+a)).
\end{align*}
By assumption this last term is zero if $a \in \{b_1,\ldots,b_n\}$.
Thus, for any fixed $k$, the map $f := \gam_k^{-1}\circ h$ 
satisfies $\alpha f(p) = f(p+a)$ for all $p \in \R^n$ and for all generators
$a = b_i$, hence for all $a \in \Z^n$. 
This property then forces the $1$-Lipschitz map $f$ to be isometric on $\Z^n$
because 
\[
\|a\| = |\alpha| \le d(f(p),\alpha f(p)) = d(f(p),f(p+a)) \le \|a\|
\] 
for all $p,a \in \Z^n$. 
Furthermore, every line segment in $\R^n$ connecting 
two points in $\Z^n$ is embedded isometrically. Since the set of all pairs 
of points which lie on a common such segment is dense in $\R^n\times\R^n$, 
we conclude that $f$ is in fact an isometric embedding.
\end{proof}

Now we proceed as follows. First we construct a $1$-Lipschitz
map $g \colon \R^n \to \Min(A)$ that sends every ray $\R_+ a$ with 
$a \in \Z^n \sm \{0\}$ isometrically to a ($\sig$-)ray asymptotic to
a $\sig$-axis of $\alpha$. Then we use a discrete averaging process based on 
barycenters (Theorem~\ref{Thm:barycenter}) to find a sequence of maps 
satisfying the assumptions of Proposition~\ref{Prop:final-step}.

\begin{proof}[Proof of Theorem~\ref{Thm:intro-flat-torus}] 
We fix a point $x \in \Min(A)$ and define a map 
$g \colon \R^n \to X$ as follows. First, for $a \in \Z^n \sm \{0\}$ and 
$\lam \in [0,1]$, put
\[
g(\lam a) := \lim_{k\to\infty} [x,\alpha^k x]\bigl(\tfrac{\lam}{k}\bigr).
\]
The limit exists by Lemma~5.1 in~\cite{DesL} since the orbit 
$\langle\alpha\rangle x$ stays within finite distance of some $\sig$-axis
of $\alpha$ by Proposition~\ref{Prop:sigma-axes},
and the definition is clearly consistent for distinct representations 
of the same point.
For $a,b \in \Z^n \sm \{0\}$ and $\lam \in [0,1]$ we have 
\begin{align*}
d(g(\lam a),g(\lam b)) 
&= \lim_{k\to\infty} d\bigl( [x,\alpha^kx]\bigl(\tfrac{\lam}{k}\bigr), 
[x,\beta^kx]\bigl(\tfrac{\lam}{k}\bigr) \bigr) \\
&\le \lim_{k\to\infty} \frac{\lam}{k}\, d(\alpha^k x, \beta^k x) \\ 
&= \lam \|a - b\|,
\end{align*}
in particular $g$ is $1$-Lipschitz on $\Q^n \sm \{0\}$.
It follows that $g$ extends uniquely to a $1$-Lipschitz map 
$g \colon \R^n \to X$.

Next, for all integers $k \ge 1$, put $I_k := [-k,k]^n \cap \Z^n$. 
We want to establish the following estimate of sublinear growth:
\begin{equation} \label{eq:sublinear}
e(k) := \sup_{a \in I_k} d(g(a),\alpha x) = o(k) 
\quad (k\to\infty).
\end{equation}
Given an \mbox{$\eps>0$}, there is a finite set $B \sub \Z^n$ and 
a constant $C$ such that for every $a \in \Z^n$ there exist a 
point $b \in B$ and a positive integer $m$ such that 
$\|a - mb\| \le \eps \|a\| + C$. 
For each $b \in B$ we pick a point $y_b \in \Min(A)$ on a $\sig$-axis of 
$\beta$. Then 
\[
d(g(mb),\beta^m y_b) 
= \lim_{k \to \infty} d\bigl( [x,\beta^{mk}x]\bigl(\tfrac{1}{k}\bigr),
[y_b,\beta^{mk} y_b]\bigl(\tfrac{1}{k}\bigr) \bigr)  
\le d(x,y_b),
\] 
hence $d(g(mb),\beta^m x) \le 2\, d(x,y_b)$. 
So let $D := \max\{2\, d(x,y_b): b \in B\}$.
Now, for every $a \in \Z^n$, if $b$ and $m$ are as above, we have
\begin{align*}
d(g(a),\alpha x) 
&\leq d(g(a),g(mb)) + d(g(mb),\beta^m x) + d(\beta^m x,\alpha x) \\
&\leq 2\|a - mb\| + D \\
&\leq 2(\eps\|a\| + C) + D.
\end{align*}
This clearly yields~\eqref{eq:sublinear}.

To conclude the proof we now construct maps that meet the requirements 
of Proposition~\ref{Prop:final-step}. Define $f_k \colon \R^n \to X$ by
\[ 
f_k(p) := \bary(\{ \alpha^{-1} g(p+a) : a\in I_k \}) .
\]
Since $g$ is $1$-Lipschitz, it follows from 
Theorem~\ref{Thm:barycenter}(2) that $f_k$ is $1$-Lipschitz as well.
For the generators $b_i$ we have
\begin{align*}
\beta_i f_k(p) &= \bary(\{ \alpha^{-1} g(p+b_i+a) : a \in I_k - b_i\}) , \\
f_k(p+b_i) &= \bary(\{ \alpha^{-1} g(p+b_i+a) : a \in I_k \}) ,
\end{align*}
the first equality being a consequence of Theorem~\ref{Thm:barycenter}(3) 
and a change of variable. In order to estimate $d(\beta_i f_k(p),f_k(p+b_i))$
we need a pairing of points in $I_k$ with points in $I_k-b_i$.
We match $a \in I_k \cap (I_k-b_i)$ with itself and $a \in I_k \sm (I_k-b_i)$ 
with $\tilde a := a - (2k+1)b_i \in (I_k-b_i) \sm I_k$. 
For a pair of the latter type we have
\begin{align*}
d(\alpha^{-1} g(p+b_i+a), x)
&= d(g(p+b_i+a), \alpha x) \\
&\le d(g(p+b_i+a),g(a)) + d(g(a),\alpha x) \\
&\le \|p+b_i\| + e(k)
\end{align*}
as well as $d(\tilde\alpha^{-1}g(p+b_i+\tilde a), x) \le \|p+b_i\| + e(k+1)$,
since $I_k-b_i \sub I_{k+1}$. Thus
\[
d(\alpha^{-1} g(p+b_i+a), \tilde\alpha^{-1} g(p+b_i+\tilde a)) 
\le 2(\|p+b_i\| + e(k+1)) .
\]
Since there are $(2k+1)^{n-1}$ such pairs $(a,\tilde a)$ out of 
$|I_k|=(2k+1)^n$ pairs in total, we conclude that
\[
d(\beta_i f_k(p),f_k(p+b_i)) 
\leq \frac{2(\|p+b_i\| + e(k+1))}{2k+1} \to 0 \quad (k\to\infty)
\]
by Theorem~\ref{Thm:barycenter}(2) and~\eqref{eq:sublinear}.
\end{proof}

We conclude this section with an example illustrating 
Theorem~\ref{Thm:intro-flat-torus}. Given $X$, $\sig$, and an isometric 
embedding $f \colon V \to X$ of some $n$-dimensional normed space $V$ as 
in the theorem, $f$ carries the canonical bicombing $\b\sig$ on $V$ to 
a consistent bicombing on the image of $f$. However, the geodesics 
$f \circ \b\sig_{pq}$ will in general not agree with $\sig_{f(p)f(q)}$. 
In fact, the following example for $n = 2$ shows that, despite of much 
flexibility in the construction of $f$, it may happen that $\im(f)$ is never 
$\sig$-convex. This stands in contrast to the case $n = 1$ treated in 
Proposition~\ref{Prop:sigma-axes}.
 
\begin{Expl} 
Let $w \colon \R \to \R$ be the $1$-periodic function satisfying 
$w(t) = |t|$ for $t \in [-\frac12,\frac12]$. Define piecewise affine 
functions $g,\b g \colon \R^2 \to \R$ by $g(s,t) := w(t)$ and 
$\b g(s,t) := \max\{w(s),w(t)\}$, and consider the set 
\[
X := \{(s,t,u) \in \R^3: g(s,t) \le u \le \b g(s,t)\},
\]
endowed with the metric induced by the maximum norm on $\R^3$.
It follows as in Example~\ref{Expl:diff-strips} that $X$ admits a unique 
consistent bicombing $\sig$ and that the lines $\xi,\xi' \colon \R \to X$
defined by $\xi(s) := (s,0,0)$ and $\xi'(t) := (\frac12,t,\frac12)$ are 
two $\sig$-lines whose traces are contained in the graphs of $g$ and $\b g$, 
respectively. Clearly $\Z^2$ acts properly and cocompactly by isometries 
on $X$ via $((z,z'),x) \mapsto x + (z,z',0)$, and the bicombing $\sig$ 
is $\Z^2$-equivariant.
Theorem~\ref{Thm:intro-flat-torus} now implies that there exist a norm 
$\|\cdot\|$ on $\R^2$ and an isometric embedding 
$f \colon (\R^2,\|\cdot\|) \to X$ such that $f(p) + (z,z',0) = f(p + (z,z'))$
for all $p \in \R^2$ and $(z,z') \in \Z^2$. 
To describe the image of $f$, 
let $\rho \colon X \to (\R^2, \|\cdot\|_\infty)$ denote the $1$-Lipschitz
projection $(s,t,u) \mapsto (s,t)$. Since the third coordinates of any
two points in $X$ differ by at most $\frac12$, it follows that 
$\rho \circ f$ preserves all distances greater than $\frac12$, but this
forces $\rho \circ f$ to be an isometry altogether. Hence $\rho|_{\im(f)}$ 
is an isometry as well, and this implies in turn that $\im(f)$ is the graph
of a $1$-Lipschitz function $h \colon (\R^2, \|\cdot\|_\infty) \to \R$
such that $g \le h \le \b g$ and $h$ is $\Z^2$-periodic. Now
$g = \b g = 0$ on $\Z^2$ and $g = \b g = \frac12$ on 
$\R \times (\frac12 + \Z)$. It follows in particular that the image of $f$ 
contains the sets $\xi(\Z)$ and $\xi'(\frac12 + \Z)$ but cannot contain 
both the points $\xi(\frac12) = (\frac12,0,0)$ and 
$\xi'(0) = (\frac12,0,\frac12)$. Thus, no matter how $f$ is chosen, 
the image of $f$ will not be $\sig$-convex.
\end{Expl}



\addcontentsline{toc}{section}{References}

\bigskip\noindent
D.~Descombes ({\tt dominic.descombes@math.ethz.ch}),\\ 
U.~Lang ({\tt urs.lang@math.ethz.ch}),\\
Department of Mathematics, ETH Zurich, 8092 Zurich, Switzerland 


\end{document}